\documentclass[leqno,final]{article}
\usepackage{setspace} 
\usepackage{geometry}
\usepackage{fancyhdr}
\usepackage{cite}
\usepackage{graphicx}
\usepackage{epstopdf}
\usepackage{subfig}
\usepackage{tikz}
\usetikzlibrary{positioning, shapes.geometric}
\usetikzlibrary{decorations.markings}
\usetikzlibrary{arrows.meta}

\usepackage{color}
\usepackage{xcolor}
\usepackage{hyperref}

\hypersetup{hidelinks}
\usepackage{indentfirst}
\usepackage{url}
\usepackage{amsfonts,amsmath,amssymb,amsthm,latexsym,lineno}
\usepackage{enumerate}
\usepackage{bm}
\usepackage{algorithm,algorithmic}
\usepackage[normalem]{ulem}
\usepackage{cleveref}

\ifpdf
 \DeclareGraphicsExtensions{.eps,.pdf,.png,.jpg}
\else
 \DeclareGraphicsExtensions{.eps}
\fi

\newtheorem{theorem}{Theorem}[section]
\newtheorem{lemma}[theorem]{Lemma}
\newtheorem{remark}[theorem]{Remark}
\newtheorem{example}[theorem]{Example}

\numberwithin{theorem}{section}
\numberwithin{equation}{section}
\numberwithin{figure}{section}
\numberwithin{algorithm}{section}



\newcommand{\be}{\begin{equation}}
\newcommand{\ee}{\end{equation}}
\newcommand{\ba}{\begin{array}}
\newcommand{\ea}{\end{array}}
\newcommand{\ben}{\begin{eqnarray}}
\newcommand{\een}{\end{eqnarray}}
\newcommand{\bn}{\begin{eqnarray*}}
\newcommand{\en}{\end{eqnarray*}}

\newcommand{\bbn}{{\bf n}}

\newcommand{\cTh}{{\cal T}_h}

\newcommand{\ls}{\lesssim}

\newcommand{\al}{\alpha}

\newcommand{\ga}{\gamma}
\newcommand{\Ga}{\Gamma}

\newcommand{\na}{\nabla}
\newcommand{\om}{\omega}
\newcommand{\Om}{\Omega}
\newcommand{\pa}{\partial}

\newcommand{\si}{\sigma}

\newcommand{\vp}{\varphi}

\newcommand{\norm}[1]{\left\Vert#1\right\Vert}

\newcommand{\abs}[1]{\left\vert#1\right\vert}

\newcommand{\set}[1]{\left\{#1\right\}}

\newcommand{\eq}[1]{\begin{align}#1\end{align}}
\newcommand{\eqn}[1]{\begin{align*}#1\end{align*}}

\DeclareSymbolFont{ugmL}{OMX}{mdugm}{m}{n}
\SetSymbolFont{ugmL}{bold}{OMX}{mdugm}{b}{n}
\DeclareMathAccent{\wideparen}{\mathord}{ugmL}{"F3}

\title{The effect of numerical integration in the FEM for elliptic problems with mixed boundary conditions}

\author{
Juan Han\thanks{School of Mathematics, Nanjing University, Nanjing 210093, China.
Email address: dg20210004@smail.nju.edu.cn}
\,\,\,\mbox{and}\,\,\,
Haijun Wu\thanks{School of Mathematics, Nanjing University, Nanjing 210093, China. 
Email address: hjw@nju.edu.cn}
}

\date{}
\begin{document}
	\maketitle

	% REQUIRED
	\begin{abstract}
		This paper investigates the impact of quadrature accuracy for volume and face integrals in the finite element method using $p$-th order polynomial shape functions for elliptic problems with mixed Dirichlet and Robin boundary conditions. The optimal $p$-th order $H^1$-convergence is maintained when numerical integration with algebraic precision at least  $2p-2$ for volume terms and at least $2p-1$ for face terms is adopted. For $L^2$-error, we achieve optimal $O(h^{p+1})$ convergence when using quadrature rules of precision no less than $\max\{p,2p-2\}$ for volume terms and no less than $2p-1$ for face terms. Of particular significance, we present two examples to show that the above result on $L^2$-error is sharp for the linear FEM ($p=1$). When reduced to the case of Dirichlet boundary condition, our results yield improved dependence on the given data compared to the classical results established by Ciarlet, \textit{et al}.  Numerical experiments are provided to illustrate the theoretical findings and confirm the necessity of specified quadrature accuracy and data regularity.
	\end{abstract}
	
	{\bf Key words:}
	numerical integration, FEM, elliptic problem, mixed boundary conditions.

\section{Introduction}\label{Introduction}

The finite element method (FEM) is implemented by selecting a set of basis functions for the variational problem, forming a system of equations, and then solving this system. The formation of these systems involves calculating numerous integrals inherent in the variational formulation. In many practical scenarios, exact computation of these integrals is not feasible, so numerical integration is needed to approximate them. What conditions must numerical integration satisfy to ensure it does not compromise the convergence order of the errors? Are these conditions sharp?

For the first question, mainstream engineering simulation platforms (e.g., ANSYS \cite{ansys}, COMSOL \cite{comsol} and FEniCS \cite{Logg12}, etc.) typically require numerical integration schemes to exactly evaluate the integrals without coefficients $\int_K v_h w_h$. Specifically, when employing piecewise polynomials of order $p$, these implementations conventionally adopt quadrature rules with algebraic precision of $2p$, as stated in \cite{comsol}: ``{\itshape ... For smooth constraints, a sufficient integration order is typically twice the order of the shape function ... }''.
While this empirical choice proves effective for handling second-order PDE systems commonly encountered in engineering applications, we note that this condition represents a conservative estimate rather than the minimal theoretical requirement, i.e., it is not sharp.

Although the control of the effect introduced by numerical integration is important for almost all applications of FEMs to problems in engineering and sciences, research in this area is meager compared to the theory of FEM itself. For the second-order elliptic equations $-\na\cdot(\al\na u)=f$ with homogeneous Dirichlet boundary conditions, Herbold  initially explored one-dimensional \cite{Herbold69} and multi-dimensional rectangular \cite{Herbold71} domains. Later, Ciarlet's classical arguments in \cite{Ciarlet1991,Ci78} established a general framework over polygonal domains, where the finite element formulation involves only volume integrals. A concrete manifestation of this theory shows that for $p$-th order elements, quadrature formulas with algebraic precision of at least $2p-2$ suffice to preserve the $H^1$-convergence rate. The analysis for $L^2$-norm error estimates was established by \cite{CR72}, combining the effect of curved boundaries and numerical integration, proving that quadrature precision of $\max\{p, 2p-2\}$ similarly maintains the FEM's optimal convergence rate. This condition of algebraic precision for $L^2$-error is the same as that for $H^1$-error when $p\ge 2$, but is one order higher for linear FEM (p=1). Babu\v{s}ka \cite{bbl11} then examined the impact of numerical integration on finite element approximations of linear functionals, by selecting an appropriate functional, one can evaluate the result concerning the $H^1$-error. However, it is worth noting that the estimate in \cite{bbl11} relies on $\|f\|_{p+1,\Omega}$ rather than $\|f\|_{p,q,\Omega}$ as utilized in \cite{Ci78}, necessitating the algebraic precision of $2p-1$. While it is possible to derive results concerning the $L^2$-error by selecting suitable functionals, they may not be optimal.

There have also been relevant studies on other model problems. Such results mainly have been derived by Raviart \cite{Raviart73} for parabolic equations, by Baker and Dougalis \cite{BakerDougalis76} for second order hyperbolic equations, by Banerjee \cite{Banerjee92,BanerjeeOsborn90} for finite element eigenvalue approximation, by Aylwin and Jerez-Hanckes \cite{AylwinJerez21} for Maxwell variational problems, and by Abdule and Vilmart \cite{AbdulleVilmart12} for nonmonotone nonlinear elliptic problems. Moreover, \cite{KimSuri93} and \cite{sobotikova08} analyzed the $p$-version of the finite element method in the presence of numerical integration.

Turning back to the basic model problem of second order elliptic equations, a few issues still need to be addressed. First, somewhat surprisingly, the authors found no literature dealing with Robin boundary condition (which is certainly one of the most important boundary conditions), for which the FEM involves  integrals on faces of element. Second, it remains to be examined whether the condition of algebraic precision for $L^2$-error of the linear FEM is necessary. Third, it is worth investigating whether the regularity requirements on the exact solution in the classical results in \cite{Ciarlet1991,Ci78,CR72} be weakened. In this paper, we consider the elliptic equation $-\na\cdot(\al\na u)+\beta u=f$ with mixed Dirichlet and Robin boundary condition. 
 Following the approach in \cite{CR72}, we conduct a rigorous analysis of how numerical integration affects both $H^1$- and $L^2$-error estimates of FEM on triangulations of polygonal domains. Specifically, we establish that achieving optimal convergence requires volume integration precision of $2p-2$ for the $H^1$-error and $\max\{p, 2p-2\}$ for the $L^2$-error. As for face integration, our analysis reveals that taking algebraic precision of $2p-1$ in boundary integration is sufficient to preserve the optimal convergence rates for both $H^1$- and $L^2$-norms, forming a uniform condition that holds consistently across all polynomial degrees $p$. Moreover, our examination provides finer-grained insights into its dependence on data regularity, compared to classical results \cite{Ci78,CR72,bbl11} for the case of pure Dirichlet boundary condition. 

We further provide concrete examples to demonstrate the sharpness of these conditions. Notably, for $p=1$, the $L^2$-error requires one order higher quadrature precision in volume integration compared to $H^1$-error. Therefore, we especially present one-dimensional examples with mixed and Dirichlet boundary conditions, where the analysis of $L^2$-error lower bounds under a quadrature of zero-order precision highlights the necessity of employing quadratures of first-order precision. Additional cases are systematically examined through numerical experiments. In summary, to ensure optimal order convergences, the algebraic precision of the quadrature rule for volume integration required for $L^2$-error is one order higher than that for the $H^1$-error for the linear FEM, and the same for higher order FEM ($p>1$). The algebraic precisions requirements of the quadrature rule for face integration are the same for both errors for each $p\ge 1$.

The rest of our paper is organized as follows.  Section~\ref{Preliminaries} establishes the notation and theoretical foundations. Section~\ref{errorEstimation} analyzes the impact of numerical integration on both $H^1$- and $L^2$-error estimates for mixed boundary value problems. Section~\ref{lowerBounds} provides two concrete examples ($p=1$) to demonstrate the sharpness of the  precision requirements for volume integration. Finally, Section~\ref{numericalExperiment} presents numerical experiments validating our theoretical results.

Throughout the paper, $C$ is used to denote a generic positive constant
which is independent of the mesh size $h$, the data and the exact solution of the elliptic problem. We also use the shorthand notation $A\lesssim B$ and $B\gtrsim A$ for the inequality $A\leq C B$ and $B\geq CA$. $A\eqsim B$ is for the statement $A\lesssim B$ and $B\lesssim A$. 

%  \cb{\begin{theorem}\label{ciarlet}
%     Suppose $E_{\hat{K}}(\hat{\varphi})=0~\forall\hat{\varphi}\in P_{2p-2}(\hat{K})$ and let $\al,\beta\in W^{p,\infty}(\Om)$ and $f\in W^{p,q}(\Om)$, here $q\ge2$ with $p>\frac{n}{q}$. If $u\in H^{p+1}(\Om)$, then we have
%     \begin{align}
%       \norm{u-u^{\star}_h}_{1,\Om}\lesssim h^p\big(\abs{u}_{p+1,\Om}+(\norm{\al}_{p,\infty,\Om}+\norm{\beta}_{p,\infty,\Om})\norm{u}_{p,\Om}+\norm{f}_{p,q,\Om}\big).
%     \end{align}
%   \end{theorem}}

\section{Preliminaries}\label{Preliminaries} 

Let $\Omega$ be a bounded convex polygonal domain with boundary $\Gamma$ in $\mathbb{R}^n$, where $n=1,2,3$, and $\Gamma=\Gamma_1\cup\Gamma_2$, $\Gamma_1\cap\Gamma_2=\emptyset$. We adopt standard notation for function spaces, norms, and inner products (see, e.g., \cite{bs08,Ci78}). Specifically,
$(\cdot,\cdot)$ and $\langle \cdot,\cdot \rangle$ denote the $L^2$-inner products on the domain $\Omega$ and boundary portion $\Gamma_2$, respectively. $|\cdot|_{j,\om}$ represents the usual semi-norm of $H^{j}(\om)$, while $\|\cdot\|_{j,\om}$ denotes the corresponding norm. For $q \neq 2$, the notations $|\cdot|_{j,q,\omega}$ and $\|\cdot\|_{j,q,\omega}$ denote the semi-norm and norm in $W^{j,q}(\omega)$. Besides, let $P_p(\omega)$ denote the space of polynomials of degree at most $p$ defined on a domain $\omega$.

We consider the effect of numerical integration in the  FEM for the following elliptic problems with mixed Dirichlet and Robin boundary conditions: 
  \eq{
    \label{EP}
    \begin{cases}
      -\nabla\cdot(\al\nabla u) + \beta u = f \quad &{\rm in}\quad \Om,\\
      u=0\quad &{\rm on} \quad\Ga_1,\\
      \al\nabla u\cdot\bbn + \gamma u= g \quad &{\rm on} \quad\Ga_2,
    \end{cases}
  }
  where $\al_0\le\al(x)\le \al_1$, $\beta_0\le \beta(x)\le \beta_1$ and $\gamma(x)\ge 0$ hold for some constants $\al_0,\al_1, \beta_0,\beta_1>0$.
%Where $\Omega$  is a a bounded and convex polygonal domain in $\mathbb{R}^n$, $n=1,2,3$. $\p \Omega=\Gamma_1\cup\Gamma_2$ and $\Gamma_1\cap\Gamma_2=\emptyset$.

\subsection{The FEM}

The variational form of \eqref{EP} is: find $u\in V:=\set{v\in H^1(\Omega):\;v|_{\Ga_1}=0}$, s.t.
  \be\label{variationalForm2}
    a(u,v)=F(v)\quad  \forall v\in V,
  \ee
  where	
  \[a(u,v):=(\al\nabla u,\nabla v)+(\beta u,v)+\langle \gamma u,v \rangle,\quad F(v):=(f,v)+\langle g,v \rangle.\]
Clearly, the variational problem \eqref{variationalForm2}  has a unique solution for any $f\in V'$ and $g\in L^2(\Ga_2)$, where $V'$ denotes the daul space of $V$. 

Let $\mathcal{T}_h$ denote a regular and quasi-uniform triangulation of $\Omega$, and $\mathcal{F}_h$ denote the set of element faces on $\Ga_2$. In this paper, we distinguish between integrals involving $\mathcal{T}_h$ and $\mathcal{F}_h$ using volume and face integrals, respectively.
Given an integer $p\ge 1$, introduce the finite element spaces,
\eqn{V_h:=\set{v_h\in C(\bar\Om):~v_h|_{K}\in P_p(K)\;  \forall K\in\cTh,v_h|_{\Ga_1}=0}.} The FEM for the variational problem \eqref{variationalForm2} reads: find $u_h \in V_h$, s.t.
  \be\label{FEMformNeumann}
    a(u_h,v_h)=F(v_h)\quad \forall v_h\in V_h.
  \ee
In fact, if the solution $u$ is smooth enough to belong to the space $H^{p+1}(\Om)$, we have the following optimal order $H^1$- and $L^2$-error estimates \cite{Ci78}:
  \be\label{neumann-uh-error}
    |u-u_{h}|_{j,\Om}\le Ch^{p+1-j}|u|_{p+1,\Om},\quad j=0,1.
  \ee
For further analysis, given $s\ge 0$, we introduce the following discrete $H^s$ norms:
\eqn{\norm{\cdot}_{s,\mathcal{T}_h}:=\Big(\sum_{K\in \mathcal{T}_h}\norm{\cdot}_{s,K}^2\Big)^\frac12\quad\text{and}\quad \norm{\cdot}_{s,\mathcal{F}_h}:=\Big(\sum_{e\in \mathcal{F}_h}\norm{\cdot}_{s,e}^2\Big)^\frac12.}

\subsection{Numerical integration}\label{NumericalIntegration}

Let $\hat{K}$ be a reference element affine equivalent to any element $K \in \mathcal{T}_h$ via an invertible affine mapping $F_K: \hat{K} \to K$. And the set $\{\hat w_{l},\hat b_{l}\}_{l=1}^{L}$ with weights $\hat w_{l}>0$ and nodes $\hat b_{l}\in\hat K$ defines a quadrature rule on the reference element $\hat K$,
\eq{\label{qd_hK}\int_{\hat{K}}\hat{\varphi}(\hat{x}){\,\rm d}\hat{x}\sim\sum_{l=1}^{L}\hat{w}_l \hat{\varphi}(\hat{b}_l).}
Then the corresponding quadrature rule on $K$ is given by
\eqn{\int_{K}\varphi(x){\,\rm d}x\sim\sum_{l=1}^{L} w_{l,K}\varphi(b_{l,K}),}
where $\hat\varphi=\varphi\circ F_K$, $w_{l,K}=\frac{|K|}{|\hat K|}\hat{w}_l,~\text{and}~b_{l,K}=F_K(\hat{b}_l)$. In the same way, let $\hat e$ be a reference face, which is affine equivalent to any face $e$ of an element $K$ under an invertible affine mapping $F_e: \hat e\mapsto e$, and let the set $\{\hat \omega_{m},\hat{\mathsf{b}}_{m}\}_{m=1}^{M}$ with weights $\hat \omega_{m}>0$ and nodes $\hat{\mathsf{b}}_{m}\in\hat e$, determine a quadrature rule on the reference face $\hat e$. 
\eq{\label{qd_he}\int_{\hat{e}}\hat{\varphi}(\hat{x}){\,\rm d}\hat{\si}\sim\sum_{m=1}^{M}\hat\omega_{m} \hat{\varphi}(\hat{\mathsf{b}}_{m}).}
Then the set $\{{\omega}_{m,e},b_{m,e}\}_{m=1}^{M}$ with $w_{m,e}=\frac{|e|}{|\hat e|}\omega_{\hat e}, b_{m,e}=F_e(\hat{\mathsf{b}}_{m})$ determines the corresponding  quadrature rule on face $e$:
\eqn{\int_{e}\varphi(x){\,\rm d}\si\sim\sum_{m=1}^{M} w_{m,e}\varphi(b_{m,e}).}
Consequently, the FEM with numerical integration is given by: find $u_h^{\star}\in V_h$, s.t.
  \be\label{integrationFEM}
  a^{\star}(u_h^{\star},v_h)=F^{\star}(v_h)\quad \forall v_h\in V_h,
  \ee
  where 
  \begin{align*}
  a^{\star}(u_h,v_h)&=\sum_{K\in\cTh}\sum_{l=1}^{L} w_{l,K}\Big[\al\na u_h\cdot\na v_h+\beta u_hv_h\Big](b_{l,K})+\sum_{e\in\mathcal{F}_h}\sum_{m=1}^{M} {w}_{m,e} [\gamma u_h v_h](b_{m,e}),\\
  F^{\star}(v_h)&=\sum_{K\in\cTh}\sum_{l=1}^{L} {w}_{l,K} [fv_h](b_{l,K}) +\sum_{e\in\mathcal{F}_h}\sum_{m=1}^{M} {w}_{m,e} [gv_h](b_{m,e}).
\end{align*}

We introduce the quadrature error functionals,
  \[E_K(\varphi)=\int_{K}\varphi(x){\,\rm d}x-\sum_{l=1}^{L} w_{l,K}\varphi(b_{l,K})\quad\text{and}\quad E_{\hat K}(\hat{\varphi})=\int_{\hat{K}}\hat{\varphi}(\hat{x}){\,\rm d}\hat{x}-\sum_{l=1}^{L}\hat{w}_l \hat{\varphi}(\hat{b}_l).\]
Similarly, we continue to define quadrature error functionals on each face $e\in\mathcal{F}_h$ and the reference face $\hat e$ as follows:
\[E_e(\varphi)=\int_{e}\varphi(x){\,\rm d}\sigma-\sum_{m=1}^{M} w_{m,e}\varphi(b_{m,e}),~\text{and}~E_{\hat{e}}(\hat{\varphi})=\int_{\hat{e}}\hat{\varphi}(\hat{x}){\,\rm d}\hat\sigma-\sum_{m=1}^{M}\hat{\omega}_m \hat{\varphi}(\hat{\mathsf{b}}_{m}).\]
It is evident that  
\eq{\label{EKhEK} E_K(\varphi)=\frac{|K|}{|\hat K|} E_{\hat K}(\hat{\varphi})\quad\text{and}\quad E_e(\varphi)=\frac{|e|}{|\hat{e}|}E_{\hat{e}}(\hat{\varphi}).}
Recall that the algebraic precision of the quadrature rule is defined by the largest integer $p$ such that $\forall v\in P_p(\omega)$, $E_\om(v)=0$, where $\omega=e$ or $K$. We remark that no boundary integrals are involved for the case \(n=1\). Accordingly, the algebraic precision of boundary quadratures is defined for two- and three-dimensional problems only. For simplicity of presentation, we will not distinguish between different dimensions in the subsequent discussion.

% Furthermore, 
% where $w_{m,e}=\frac{|e|}{|\hat{e}|}\hat{w}_m$.		In this case, we have the scale relationship $$.

To estimate the errors of the quadrature rules, we need the following lemma which is a special case of the Bramble-Hilbert lemma \cite[Theorem 4.1.3]{Ci78}.
\begin{lemma}\label{lemBL} Let $\om\subset\mathbb{R}^n$ be a domain with Lipschitz boundary, $m\ge 1$, $1\le q\le\infty$, and $E$ be a bounded linear functional on $W^{m,q}(\om)$, which satisfies $E v=0\;\forall v\in P_{m-1}$. Then
\eqn{\abs{E \vp}\ls \abs{\vp}_{m,q,\om}\quad\forall \vp\in W^{m,q}(\om).} 
\end{lemma}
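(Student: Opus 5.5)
The plan is the classical Deny--Lions/Bramble--Hilbert argument: pass to a quotient space, where the seminorm $\abs{\cdot}_{m,q,\om}$ is a norm, and then use that $E$ vanishes on the polynomial part. The key tool is the Deny--Lions estimate on $\om$: for every $\vp\in W^{m,q}(\om)$,
\eqn{\inf_{\pi\in P_{m-1}}\norm{\vp+\pi}_{m,q,\om}\ls \abs{\vp}_{m,q,\om}.}
If I am allowed to quote it from \cite[Theorem 4.1.3]{Ci78}, the proof is short.

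The steps are as follows.

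\textbf{Step 1 (reduction).} Fix $\vp\in W^{m,q}(\om)$. Since $E\pi=0$ for every $\pi\in P_{m-1}$ and $E$ is linear, we have $E\vp=E(\vp+\pi)$ for all such $\pi$. Boundedness of $E$ on $W^{m,q}(\om)$ then gives
\eqn{\abs{E\vp}\le \norm{E}\,\norm{\vp+\pi}_{m,q,\om}\quad\text{for all }\pi\in P_{m-1}.}

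\textbf{Step 2 (optimize).} Take the infimum over $\pi$ and apply the Deny--Lions estimate. This yields $\abs{E\vp}\ls \norm{E}\abs{\vp}_{m,q,\om}$. The hidden constant depends on $\om$, $m$, $q$, $n$ and on the norm of $E$, which is consistent with the paper's convention for $\ls$.

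\textbf{Step 3 (self-contained proof of Deny--Lions).} This is the main obstacle. I would choose $\pi$ so that all moments $\int_\om \pa^\alpha(\vp+\pi)$ with $\abs{\alpha}\le m-1$ vanish. Such a $\pi$ exists uniquely: the system is triangular in the degree, so one solves from the top order down. It then suffices to prove $\norm{w}_{m,q,\om}\ls\abs{w}_{m,q,\om}$ for all $w$ with vanishing moments. I would argue this by contradiction:
\begin{itemize}
\item[(i)] Suppose there are $w_k$ with vanishing moments, $\norm{w_k}_{m,q,\om}=1$ and $\abs{w_k}_{m,q,\om}\to0$.
\item[(ii)] By Rellich--Kondrachov compactness of $W^{m,q}\hookrightarrow W^{m-1,q}$ on Lipschitz domains, a subsequence of $w_k$ converges in $W^{m-1,q}$. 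Because the top-order seminorm tends to zero, the subsequence is in fact Cauchy in $W^{m,q}$.
\item[(iii)] The limit $w$ then has $\abs{w}_{m,q,\om}=0$, so $w\in P_{m-1}$, since $\om$ is connected. It also has vanishing moments, so $w=0$. This contradicts $\norm{w}_{m,q,\om}=1$.
\end{itemize}
For $q=\infty$, compactness can instead be obtained via Arzelà--Ascoli, or one can use the embedding $W^{m,\infty}\subset W^{m,r}$ for finite $r$ on the bounded domain $\om$.

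In the paper's applications, $\om$ is the fixed reference element $\hat K$ or $\hat e$. The constant is therefore a fixed number, and the $h$-scaling is handled afterwards through \eqref{EKhEK}.
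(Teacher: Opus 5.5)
Your proposal is correct and follows essentially the argument behind the paper's treatment. The paper gives no proof of its own; it invokes the Bramble--Hilbert lemma \cite[Theorem 4.1.3]{Ci78}, which is proved exactly by your Steps 1--2, namely writing $E\vp=E(\vp+\pi)$ and taking the infimum over $\pi\in P_{m-1}$, while your Step 3 reproduces the standard compactness proof of the quotient-norm (Deny--Lions) estimate on which that result rests. You also track the constant explicitly as $C(\om)\norm{E}$, which is precisely the form the paper later needs when it applies the lemma to functionals such as $E_{\hat K}((\hat w-\hat\varPi_0\hat w)(\cdot))$.
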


% A quadrature rule for an integral on $\om$ is represented by
% \eq{\label{QDo}\int_\om\vp(x) dx\sim \sum_{l=1}^{L}w_l \varphi(b_l),}
% where $w_l$ and $b_l$ are called weights and nodes, respectively. Denote the error functional as
% \eqn{E_\om(\vp)=\int_{\om}\vp(x)dx-\sum_{l=1}^{L} w_l\varphi(b_l).}
% Clearly, $E$ is bounded on $C^0(\bar\om)$, i.e., $|E(\vp)|\ls\norm{\vp}_{C^0(\bar\om)}\;\forall \vp\in C^0(\bar\om)$.
% Recall that the algebraic precision of the quadrature rule is defined by the largest integer $p$ such that $E_\om(v)=0\;\forall v\in P_p$. 
% \begin{lemma}\label{lemQD} Let $\om\subset\mathbb{R}^n$ be a domain with Lipschitz boundary, $m\ge 1$, $1\le q\le\infty$. If $m-\frac{n}{q}>0$ and the quadrature formula \eqref{QDo} has algebraic precision $m-1$, then
% \eqn{|E_\omega(\vp)|\ls \abs{\vp}_{m,q,\om}\quad\forall \vp\in W^{m.q}(\om).} 
% \end{lemma}
% \begin{proof}
% From the Sobolev embedding theorem \cite{adm75}, we have $|E(\vp)|\ls \norm{\vp}_{C^0(\bar\om)}\ls \norm{\vp}_{m,q,\om}$. Then the proof follows by using the Bramble-Hilbert lemma.
% \end{proof}

Additionally, we will frequently employ the following scaling results in our subsequent analyses.
\begin{lemma}\label{lemScale} Suppose $\om, \hat\om\subset\mathbb{R}^n$ are affine equivalent, that is, there exists an invertible affine mappings $ F:\hat\om\rightarrow \om$. Let $m\ge 1$ and $1\le q\le\infty$. Then, for any function $v\in W^{m,q}(\om)$, we have  $\hat v:=v\circ F\in W^{m,q}(\hat\om)$ and 
\eqn{|\hat{v}|_{m,q,\hat\om}&\ls h_\om^m|\om|^{-1/q}|v|_{m,q,\om},\quad |v|_{m,q,\om}\ls\rho_\om^{-m}|\om|^{1/q}|\hat{v}|_{m,q,\hat\om},
}
where $h_\om$ is the diameter of $\om$, $\rho_\om$ is the diameter of the largest ball contained in $\om$, and the invisible constants after $\ls$ may depends on $\hat\om$ but independent of $\om$.
\end{lemma}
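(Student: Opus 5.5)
The plan is to reduce everything to the case $m=1$ by the chain rule, and then handle the seminorms on the two sides of each inequality separately.

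First step: because $F$ is affine, write $F(\hat x)=B\hat x+b$ with $B$ an invertible $n\times n$ matrix. For smooth $v$ the chain rule gives, for every multi-index of order $m$, that $D^m\hat v(\hat x)$ is the $m$-linear form $D^mv(F(\hat x))(B\cdot,\dots,B\cdot)$. Hence pointwise
\[
\abs{D^m\hat v(\hat x)}\ls \norm{B}^m\,\abs{D^m v(F(\hat x))}.
\]
Changing variables $x=F(\hat x)$, $\mathrm{d}x=\abs{\det B}\,\mathrm{d}\hat x$, then gives
\[
\abs{\hat v}_{m,q,\hat\om}\ls \norm{B}^m\abs{\det B}^{-1/q}\abs{v}_{m,q,\om}.
\]
The same argument for $q=\infty$ simply omits the determinant factor. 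Density of smooth functions in $W^{m,q}(\om)$ (valid for $q<\infty$ on Lipschitz domains; for $q=\infty$ argue with weak derivatives directly) yields $\hat v\in W^{m,q}(\hat\om)$. The reverse inequality follows symmetrically using $F^{-1}(x)=B^{-1}x-B^{-1}b$:
\[
\abs{v}_{m,q,\om}\ls \norm{B^{-1}}^m\abs{\det B}^{1/q}\abs{\hat v}_{m,q,\hat\om}.
\]

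Second step: bound the geometric quantities. We have $\abs{\det B}=\abs{\om}/\abs{\hat\om}$, so $\abs{\det B}^{\mp1/q}\eqsim\abs{\om}^{\mp1/q}$ with constants depending only on $\hat\om$. For $\norm{B}$, take any $\hat\xi$ with $\abs{\hat\xi}=\rho_{\hat\om}$. There are points $\hat y,\hat z\in\bar{\hat\om}$ with $\hat y-\hat z=\hat\xi$ (from the inscribed ball), so
\[
B\hat\xi=F(\hat y)-F(\hat z)
\]
has length at most $h_\om$. Thus $\norm{B}\le h_\om/\rho_{\hat\om}$. Symmetrically, $\norm{B^{-1}}\le h_{\hat\om}/\rho_\om$. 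Substituting these bounds into the two inequalities of the first step gives exactly the stated estimates, with constants depending on $\hat\om$, $n$, $m$ only.

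The main point requiring care is the chain-rule estimate for general $m$, i.e.\ that the constant depends only on $n$ and $m$. This is handled by viewing $D^m$ as a symmetric multilinear form and using the operator norm, which is equivalent to the sum over multi-indices up to dimension-dependent constants. Everything else is routine; this is the standard argument of \cite[Theorem 3.1.2]{Ci78}, which one could alternatively cite directly.
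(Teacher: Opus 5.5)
Your proof is correct and is the standard argument the paper relies on: the paper states this lemma without proof, and your chain-rule bound $\|B\|^m|\det B|^{-1/q}$, combined with $\|B\|\le h_\om/\rho_{\hat\om}$, $\|B^{-1}\|\le h_{\hat\om}/\rho_\om$ and $|\det B|=|\om|/|\hat\om|$, is exactly \cite[Theorems 3.1.2--3.1.3]{Ci78}. One small point: showing $\hat v\in W^{m,q}(\hat\om)$ needs neither density nor a Lipschitz assumption, since for an affine bijection the weak-derivative identity transfers directly by changing variables against the test functions $\phi\circ F^{-1}$.
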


%Before delving into the approximation properties of $u_{h}^{\star}$, we must ensure that problem \eqref{integrationFEM} has a unique solution, which necessitates that $a^{\star}(\cdot,\cdot)$ possesses uniform ellipticity as proposed in \cite{Ci78}. In the following lemma, we provide the sufficient conditions for $a^{\star}$ to possess uniform $V_h$-ellipticity.

The well-posedness of the discrete problem \eqref{integrationFEM} is a consequence of the following lemma which gives the uniform $V_h$-ellipticity of the bilinear form $a^{\star}(\cdot,\cdot)$.  
\begin{lemma}\label{Vh-ellipticity}
  Let there be given a quadrature scheme $E_{\hat K}(\cdot)$ that is exact for the space $P_{2p-2}(\hat{K})$. Then the approximate bilinear form satisfies uniform $V_h$-ellipticity, i.e.
  \be
    a^{\star}(v_h,v_h)\gtrsim \norm{v_h}^2_{1,\Om}\quad \forall v_h\in V_h.
  \ee
\end{lemma}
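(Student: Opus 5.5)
Since $a^{\star}(v_h,v_h)$ is a sum of three quadrature sums over elements and faces, the plan is to bound each one element by element and then add up. The face term $\sum_{e}\sum_m w_{m,e}[\gamma v_h^2](b_{m,e})$ is nonnegative, because $\hat\omega_m>0$ and $\gamma\ge 0$, so we simply drop it. The volume sums then have to control both $|v_h|_{1,\Om}^2$ and $\|v_h\|_{0,\Om}^2$. For the first we use the lower bound $\al\ge\al_0$ and for the second $\beta\ge\beta_0$; the positive weights allow us to pull these constants out:
\eqn{a^{\star}(v_h,v_h)\ge \sum_{K\in\cTh}\sum_{l=1}^L w_{l,K}\big(\al_0|\na v_h|^2+\beta_0 v_h^2\big)(b_{l,K}).}
Thus it suffices to show, for every $K$ and every $v\in P_p(K)$,
\eqn{\sum_{l} w_{l,K}|\na v(b_{l,K})|^2\gtrsim |v|_{1,K}^2,\qquad \sum_{l} w_{l,K}v(b_{l,K})^2\gtrsim \|v\|_{0,K}^2,}
with constants independent of $K$.

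For the gradient term we work on $\hat K$. The functions $|\na v|^2$ are polynomials of degree $2p-2$, so exactness on $P_{2p-2}(\hat K)$ together with the affine transformation gives
\eqn{\sum_l w_{l,K}|\na v(b_{l,K})|^2=\int_K|\na v|^2\,{\rm d}x=|v|_{1,K}^2}
exactly. Here we use that under an affine map each component of $\na v$ is still a polynomial of degree $p-1$, so $E_K(|\na v|^2)=\frac{|K|}{|\hat K|}E_{\hat K}(\widehat{|\na v|^2})=0$ by \eqref{EKhEK}.

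The main obstacle is the $L^2$ term. The integrand $v^2$ has degree $2p$, which exceeds the precision $2p-2$, so exactness fails and the quadrature sum alone need not control $\|v\|_{0,K}$. For example, with $p=1$ and a one-point rule, $v$ can vanish at the node. We therefore avoid element-wise control of $\|v\|_{0,K}$ and instead use the gradient term globally. Since $\beta_0>0$, we have $a^{\star}(v_h,v_h)\ge \al_0|v_h|_{1,\Om}^2$.
\begin{itemize}
\item When $\Ga_1$ has positive measure, the Poincar\'e--Friedrichs inequality on $V$ gives $\|v_h\|_{1,\Om}\ls|v_h|_{1,\Om}$, which finishes the proof.
\item When $\Ga_1$ is empty, we need $|v_h|_{1,\Om}^2+\sum_{K,l}w_{l,K}v_h(b_{l,K})^2\gtrsim\|v_h\|_{0,\Om}^2$. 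Here we use a local argument. On $\hat K$ consider
\eqn{\hat v\mapsto \Big(|\hat v|_{1,\hat K}^2+\sum_l\hat w_l\hat v(\hat b_l)^2\Big)^{1/2}.}
This is a norm on the finite-dimensional space $P_p(\hat K)$: if it vanishes, then $\hat v$ is constant, and $\sum_l\hat w_l\hat v(\hat b_l)^2=0$ with $L\ge1$ and positive weights forces that constant to be $0$. By equivalence of norms it is $\gtrsim\|\hat v\|_{0,\hat K}$.
\end{itemize}

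We then scale back to $K$ using Lemma~\ref{lemScale}. This gives
\eqn{\|v\|_{0,K}^2\ls h_K^2|v|_{1,K}^2+\sum_l w_{l,K}v(b_{l,K})^2,}
since $|\hat v|_{1,\hat K}^2\ls h_K^{2}|K|^{-1}|v|_{1,K}^2$ and $\sum_l \hat w_l\hat v(\hat b_l)^2=\frac{|\hat K|}{|K|}\sum_l w_{l,K}v(b_{l,K})^2$. With $h\le C$, summing over $K$ and combining with the gradient identity yields $a^{\star}(v_h,v_h)\gtrsim |v_h|_{1,\Om}^2+\|v_h\|_{0,\Om}^2$. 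This local argument in fact covers both cases uniformly, so we present it as the main proof and mention Poincar\'e only as an alternative. The constants depend only on $\hat K$, the quadrature rule, the shape regularity of the mesh, $\al_0$ and $\beta_0$.
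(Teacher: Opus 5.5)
Your proof is correct, and it takes a different, more self-contained route than the paper. The paper simply cites \cite[Theorem~4.1.2]{Ci78}, which gives uniform $V_h$-ellipticity for the homogeneous Dirichlet problem, and notes that the face quadrature sum is nonnegative. You prove everything directly instead. Exactness on $P_{2p-2}(\hat K)$, transported by the affine map, gives the identity $\sum_l w_{l,K}|\na v(b_{l,K})|^2=|v|_{1,K}^2$. You then control the $L^2$ part elementwise, using the $\beta$-quadrature term, through the norm $\big(|\hat v|_{1,\hat K}^2+\sum_l\hat w_l\hat v(\hat b_l)^2\big)^{1/2}$ on $P_p(\hat K)$ followed by scaling.

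What your route buys is that $\Ga_1=\emptyset$ (pure Robin, or Neumann with $\ga=0$) is handled on the same footing as $\Ga_1\neq\emptyset$. Ciarlet's theorem is set in $H^1_0(\Om)$ for a form without a zeroth-order term. It passes from $|v_h|_{1,\Om}$ to $\|v_h\|_{1,\Om}$ by the Poincar\'e inequality, so by itself it does not give the $L^2$ control needed when $\Ga_1=\emptyset$. The paper asserts this case right after the lemma but does not argue it; your local norm-equivalence step supplies exactly the missing piece. Your argument also uses only affine equivalence, positive weights and $h\ls 1$; shape regularity is never actually needed, even though you list it among the dependencies.

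What the paper's citation buys is brevity and the extra generality of Ciarlet's result. That result also admits positive-weight rules that are not exact on $P_{2p-2}(\hat K)$, provided the nodes contain a $P_{p-1}(\hat K)$-unisolvent subset.

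One minor point: $\|v\|_{0,K}^2=\frac{|K|}{|\hat K|}\|\hat v\|_{0,\hat K}^2$ is just the change of variables. Lemma~\ref{lemScale} as stated requires $m\ge1$, so it does not cover this step.
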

\begin{proof}
The proof follows by using the result \cite[Theorem~4.1.2]{Ci78} for the problem with Dirichlet boundary condition and the fact that $\sum_{e\in\mathcal{F}_h}\sum_{m=1}^{M} {w}_{m,e} [\gamma v_h v_h](b_{m,e})\ge 0$.
\end{proof}

This lemma is also crucial in the subsequent error analysis, and it is correct for $\Gamma_1=\emptyset$ or $\Gamma_2=\emptyset$.

\section{Error estimates of FEM with numerical integration}\label{errorEstimation}

In this section, we will establish Strang-type lemmas for both the $H^1$- and $L^2$-norm error estimates by exploiting the uniform $V_h$-ellipticity from Lemma~\ref{Vh-ellipticity} and the dual problem. Through systematic analysis of each component of the consistency error, we will ultimately derive the error estimates for the discrete problem \eqref{integrationFEM}.

\subsection{$\mathbf{H^1}$-error estimation}\label{errorEstimationH1}
Similar to the Céa lemma, we now present the first Strang lemma, which demonstrates that the error of the discrete solution to \eqref{integrationFEM} can be  managed by considering the interpolation error and the numerical integration approximation errors of $a(\cdot,\cdot)$ and $F(\cdot)$. Since the proof of this lemma proceeds analogously to that of \cite[Theorem 4.1.1]{Ci78}, here we omit the details of the proof.

\begin{lemma}\label{Strang}
  If $a^{\star}(\cdot,\cdot)$ is uniformly $V_h$-elliptic, then we have
  \eq{\label{Strang1}\norm{u-u_{h}^{\star}}_{1,\Om}&\lesssim\inf_{v_h\in V_h}\bigg(\norm{u-v_h}_{1,\Om}+\sup_{0\neq w_h\in V_h}\frac{|a(v_h,w_h)-a^{\star}(v_h,w_h)|}{\norm{w_h}_{1,\Om}}\bigg)\notag\\
  &\quad +\sup_{0\neq w_h\in V_h}\frac{|F^{\star}(w_h)-F(w_h)|}{\norm{w_h}_{1,\Om}}.}
\end{lemma}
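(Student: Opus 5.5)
The argument is the standard first Strang lemma. It rests on three ingredients: the uniform $V_h$-ellipticity of $a^{\star}$, the continuity of the exact form $a(\cdot,\cdot)$ on $V$, and the Galerkin-type identity satisfied by $u$. Throughout I fix an arbitrary $v_h\in V_h$ and split
\[
u-u_h^{\star}=(u-v_h)+(v_h-u_h^{\star}).
\]
The first piece is already one of the terms in \eqref{Strang1}, so everything reduces to bounding the discrete piece $w_h:=u_h^{\star}-v_h\in V_h$.

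\textbf{Step 1 (ellipticity and expansion).} By uniform $V_h$-ellipticity (Lemma~\ref{Vh-ellipticity}) and the discrete equation \eqref{integrationFEM},
\[
\norm{w_h}_{1,\Om}^2\lesssim a^{\star}(u_h^{\star}-v_h,w_h)=F^{\star}(w_h)-a^{\star}(v_h,w_h).
\]
Since $V_h\subset V$, the exact equation \eqref{variationalForm2} gives $F(w_h)=a(u,w_h)$. Adding and subtracting this, I write the right-hand side as
\[
a(u-v_h,w_h)+\big(a(v_h,w_h)-a^{\star}(v_h,w_h)\big)+\big(F^{\star}(w_h)-F(w_h)\big).
\]

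\textbf{Step 2 (continuity of $a$).} For the first term I use
\[
|a(u-v_h,w_h)|\lesssim \norm{u-v_h}_{1,\Om}\norm{w_h}_{1,\Om}.
\]
The volume parts follow from $\al,\beta\in L^\infty$. The boundary part $\langle\gamma(u-v_h),w_h\rangle$ is where some care is needed: it requires $\gamma\in L^\infty(\Ga_2)$, which I would take as implicitly assumed, together with the trace inequality $\norm{v}_{0,\Ga_2}\lesssim\norm{v}_{1,\Om}$. This is the only point that goes beyond the Dirichlet case treated in \cite[Theorem 4.1.1]{Ci78}, and it is routine.

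\textbf{Step 3 (conclusion).} Dividing by $\norm{w_h}_{1,\Om}$, and bounding each consistency term by its supremum over $0\ne w_h\in V_h$, gives
\[
\norm{v_h-u_h^{\star}}_{1,\Om}\lesssim \norm{u-v_h}_{1,\Om}+\sup_{0\neq w_h\in V_h}\frac{|a(v_h,w_h)-a^{\star}(v_h,w_h)|}{\norm{w_h}_{1,\Om}}+\sup_{0\neq w_h\in V_h}\frac{|F^{\star}(w_h)-F(w_h)|}{\norm{w_h}_{1,\Om}}.
\]
The case $w_h=0$ is trivial. The triangle inequality then bounds $\norm{u-u_h^{\star}}_{1,\Om}$ by the same three quantities. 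Since $v_h$ was arbitrary and the $F$-term does not depend on $v_h$, taking the infimum over $v_h\in V_h$ yields \eqref{Strang1}.
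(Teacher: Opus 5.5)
Your proposal is correct and follows the same route the paper intends. The paper omits the proof and defers to Ciarlet's Theorem 4.1.1, which is exactly your argument: ellipticity of $a^{\star}$ applied to $u_h^{\star}-v_h$, the three-term splitting via $F(w_h)=a(u,w_h)$, continuity of $a$, and the triangle inequality. Your remark that continuity of $a$ now also needs $\gamma\in L^\infty(\Gamma_2)$ and a trace inequality is the one Robin-specific detail the omitted proof glosses over; it is consistent with the paper's later assumption $\gamma\in W^{p,\infty}(\Gamma_2)$.
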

% \begin{proof}
%   \cb{Similar to \cite[Theorem 4.1.1]{Ci78}}, let $v_h$ be an arbitrary element in the space $V_h$. With the assumption of uniform $V_h$-ellipticity,
%   \eqn{\norm{u_{h}^{\star}-v_h}_{1,\Om}^2&\ls a^{\star}(u_{h}^{\star}-v_h,u_{h}^{\star}-v_h)\nonumber\\
%     &=a(u-v_h,u_{h}^{\star}-v_h)+(a(v_h,u_{h}^{\star}-v_h)-a^{\star}(v_h,u_{h}^{\star}-v_h))\nonumber\\
%     &\quad+(F^{\star}(u_{h}^{\star}-v_h)-F(u_{h}^{\star}-v_h)), 
% }
%   so that, with the continuity of the bilinear form $a(\cdot,\cdot)$,
%   \begin{align*}
%     \norm{u_{h}^{\star}-v_h}_{1,\Om}
% %&\ls \norm{u-v_h}_{1,\Om}+\frac{|a(v_h,u_h^{\star}-v_h)-a^{\star}(v_h,u_h^{\star}-v_h)|}{\norm{u_h^{\star}-v_h}_{1,\Om}}+\frac{|F^{\star}(u_h^{\star}-v_h)-F(u_h^{\star}-v_h)|}{\norm{u_h^{\star}-v_h}_{1,\Om}}\\
%     \ls &\norm{u-v_h}_{1,\Om}+\sup_{0\neq w_h\in V_h}\frac{|a(v_h,w_h)-a^{\star}(v_h,w_h)|}{\norm{w_h}_{1,\Om}}\\
%     &+\sup_{0\neq w_h\in V_h}\frac{|F^{\star}(w_h)-F(w_h)|}{\norm{w_h}_{1,\Om}},
%   \end{align*}
%   which together with the triangle inequality completes the proof of the lemma.
% \end{proof}

It is clear that from Lemma \ref{Strang}, the effect of numerical integration on $H^1$-error depends on $F^{\star}(w_h)-F(w_h)$ and $a(v_h,w_h)-a^{\star}(v_h,w_h)$. In fact, if the integration is exact, these differences are zeros. To estimate these terms, we recall two estimates given in \cite{Ci78} on $E_K(\al\nabla v\cdot\nabla w)$ and $E_K(fw)$, respectively, and present an estimate on $E_K(\beta vw)$.
\begin{lemma}\label{apvpvpw-fw}
  Assume that $\forall \hat{\varphi}\in P_{2p-2}(\hat{K}),$ $E_{\hat K}(\hat{\varphi})=0.$ Then for any $K\in\cTh$, $\al,\beta\in W^{p,\infty}(K)$ and $v,w\in P_p(K)$
  \begin{align}
    |E_K(\al\nabla v\cdot\nabla w)|&\ls h_K^p\norm{\al}_{p,\infty,K}\norm{v}_{p,K}|w|_{1,K},\label{EKagvgw}\\
    |E_K(\beta vw)|&\lesssim h_K^p\norm{\beta}_{p,\infty,K}\norm{v}_{p,K}\norm{w}_{1,K}.\label{EKbvw}
  \end{align}
  Furthermore, if $q\in[1,\infty] $ is any number satisfying $p-\frac{n}{q}>0$, then 
  \eq{\label{EKfw}|E_K(fw)|\ls h_K^p|K|^{\frac12-\frac1q}\norm{f}_{p,q,K}\norm{w}_{1,K} \quad\forall\,f\in W^{p,q}(K)~\text{and}~w\in P_p(K).}
\end{lemma}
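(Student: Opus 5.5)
The plan follows Ciarlet's argument from \cite[Theorem 4.1.4]{Ci78}: pass to the reference element, apply Lemma~\ref{lemBL} (Bramble--Hilbert) to a suitable bilinear functional, and return to $K$ by Lemma~\ref{lemScale}. The first two estimates \eqref{EKagvgw} and \eqref{EKfw} are the ones in \cite{Ci78}. I therefore spell out the scheme once, for \eqref{EKbvw}, and indicate the changes for the others.

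\textbf{Estimate \eqref{EKbvw}.} By \eqref{EKhEK}, $E_K(\beta vw)=\frac{|K|}{|\hat K|}E_{\hat K}(\hat\beta\hat v\hat w)$, with $\hat v,\hat w\in P_p(\hat K)$. Let $\hat\Pi_0$ be the $L^2(\hat K)$-projection onto constants and write
\[
\hat w=\hat\Pi_0\hat w+(\hat w-\hat\Pi_0\hat w).
\]

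\emph{Constant part.} For fixed $\hat\psi\in P_{p}(\hat K)$, the functional $\hat\beta\mapsto E_{\hat K}(\hat\beta\hat\psi)$, with $\hat\psi=\hat v\,\hat\Pi_0\hat w$, is bounded on $W^{p,\infty}(\hat K)$. It vanishes on $P_{p-2}$, hence we lose one order there, so I refine the splitting. Fix $\hat w$ and consider the bilinear map $(\hat\beta,\hat v)\mapsto E_{\hat K}(\hat\beta\hat v\hat w)$. It vanishes whenever $\hat\beta\in P_{p-2}$ if $\hat w$ is constant, since $\hat\beta\hat v\in P_{2p-2}$. The standard Ciarlet trick is then to write
\[
E_{\hat K}(\hat\beta\hat v\hat w)=E_{\hat K}\big((\hat\beta\hat v-\hat\Pi(\hat\beta\hat v))\hat w\big),
\]
where $\hat\Pi$ is a projection onto $P_{p-1}$ for linear $\hat w$. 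In general, I follow Ciarlet: by Bramble--Hilbert applied to $\hat\varphi\mapsto E_{\hat K}(\hat\varphi\hat w)$, which vanishes on $P_{p-2}$ for all $\hat w\in P_p$, together with the finite dimensionality of $P_p$,
\[
|E_{\hat K}(\hat\varphi\hat w)|\ls|\hat\varphi|_{p-1,\infty,\hat K}\norm{\hat w}_{0,\hat K}.
\]
Handle $\hat w=\hat\Pi_0\hat w+(\hat w-\hat\Pi_0\hat w)$ separately:
\begin{itemize}
\item For the constant part, $\hat\varphi\mapsto E_{\hat K}(\hat\varphi)$ vanishes on $P_{2p-2}\supset P_{p-1}$, giving $|\hat\beta\hat v|_{p,\infty,\hat K}|\hat\Pi_0\hat w|$.
\item For the rest, use $|\hat\beta\hat v|_{p-1,\infty,\hat K}\,|\hat w|_{1,\hat K}$, via Poincaré on $P_p/\mathbb{R}$.
\end{itemize}
Leibniz's rule and $|\hat v|_{j,\infty,\hat K}\ls|\hat v|_{j,\hat K}$ (equivalence of norms on $P_p$, with $\hat v$ of degree $\le p$) give
\[
|\hat\beta\hat v|_{m,\infty,\hat K}\ls\sum_{j\le m}|\hat\beta|_{m-j,\infty,\hat K}|\hat v|_{j,\hat K}.
\]

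\emph{Scaling back.} Scale with Lemma~\ref{lemScale}: $|\hat\beta|_{k,\infty}\ls h_K^k|\beta|_{k,\infty,K}$, $|\hat v|_{j,\hat K}\ls h_K^j|K|^{-1/2}|v|_{j,K}$, $|\hat\Pi_0\hat w|\ls|K|^{-1/2}\norm{w}_{0,K}$, and $|\hat w|_{1,\hat K}\ls h_K|K|^{-1/2}|w|_{1,K}$. In the constant-part terms every product carries $h_K^{p}$. In the remaining terms we get $h_K^{p-1}\cdot h_K=h_K^p$. The factor $|K|\cdot|K|^{-1/2}|K|^{-1/2}$ is $1$. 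This yields \eqref{EKbvw}, with $\norm{v}_{p,K}$ collecting the $|v|_{j,K}$, $j\le p$. Note that $|\hat v|_{p,\hat K}$ appears, but never higher, because $\hat v\in P_p$.

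\textbf{Estimate \eqref{EKagvgw}.} The same argument applies with $\hat v,\hat w$ replaced by the gradients $\nabla\hat v,\nabla\hat w\in P_{p-1}$. Only the splitting of $\partial\hat w$ into its mean and remainder is needed, and the integrand $\hat\alpha\,\partial\hat v\,\partial\hat w$ has polynomial part of degree $2p-2$.

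\textbf{Estimate \eqref{EKfw}.} Here I use $p-n/q>0$, which gives $W^{p,q}(\hat K)\hookrightarrow C^0$ and hence boundedness of $E_{\hat K}(\cdot\,\hat w)$ on $W^{p,q}$. The rest is identical, with the scaling $|\hat f|_{k,q,\hat K}\ls h_K^k|K|^{-1/q}|f|_{k,q,K}$ producing the factor $|K|^{1/2-1/q}$.

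\textbf{Main obstacle.} The delicate step is getting the full power $h_K^p$ while only the $\norm{w}_{1,K}$ norm appears. This requires the split of $\hat w$ into a constant plus a zero-mean part, so that the exactness of degree $2p-2$ absorbs one derivative in each piece. It also requires that the bilinear Bramble--Hilbert bounds be uniform over $\hat w$ in the finite-dimensional space $P_p$.
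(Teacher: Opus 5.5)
Your proof of \eqref{EKbvw} is the paper's proof. You split $\hat w=\hat{\varPi}_0\hat w+(\hat w-\hat{\varPi}_0\hat w)$, apply Lemma~\ref{lemBL} with $m=p$ to the constant piece (exactness on $P_{2p-2}\supset P_{p-1}$) and with $m=p-1$ to the zero-mean piece, then use Leibniz, norm equivalence on $P_p$, and scaling. The abandoned detours in your ``constant part'' paragraph (Bramble--Hilbert in $\hat\beta$ alone, the projection onto $P_{p-1}$) can simply be deleted.

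For $p=1$ your zero-mean bound reduces to plain boundedness, $|E_{\hat K}(\hat\varphi\hat\chi)|\ls\norm{\hat\varphi}_{0,\infty,\hat K}\norm{\hat\chi}_{0,\hat K}$, and that is the right treatment. The paper's remark that this term vanishes for $p=1$ is not correct: take the left-endpoint rule on $[0,1]$ with $\hat\beta=\hat v=1$ and $\hat w=\hat x-\frac12$.

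Like the paper, you may rest \eqref{EKagvgw} and \eqref{EKfw} on \cite{Ci78}. However, the adaptations you indicate for them do not work as written.

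For \eqref{EKfw}, ``the rest is identical'' fails on the zero-mean piece. The functional $\hat f\mapsto E_{\hat K}(\hat f(\hat w-\hat{\varPi}_0\hat w))$ vanishes only on $P_{p-2}$. Copying the $\beta$-argument therefore means applying Lemma~\ref{lemBL} with $m=p-1$ in $W^{p-1,q}(\hat K)$. That requires this point-value functional to be bounded on $W^{p-1,q}(\hat K)$, i.e. $p-1>n/q$, whereas you only have $p>n/q$. It fails, for example, for $n=3$, $p=2$, $q=2$ (a case allowed in Theorem~\ref{H1-err}), and for $p=1$, $q<\infty$.

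The missing ingredient is Ciarlet's intermediate space, which the paper also uses in Lemma~\ref{apvpvpw-fw-D}. For $p\ge2$, choose $\rho$ with $W^{p,q}(\hat K)\hookrightarrow W^{p-1,\rho}(\hat K)\hookrightarrow C^0(\hat K)$. Apply Bramble--Hilbert in $W^{p-1,\rho}$ and use $|\hat f|_{p-1,\rho,\hat K}\ls|\hat f|_{p-1,q,\hat K}+|\hat f|_{p,q,\hat K}$. For $p=1$, bound directly by $\norm{\hat f}_{0,\infty,\hat K}\ls\norm{\hat f}_{1,q,\hat K}$. Equivalently, use Bramble--Hilbert in the form $\inf_{\pi\in P_{p-2}}\norm{\hat f-\pi}_{p,q,\hat K}$. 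Every resulting term carries at least $h_K^{p-1}|K|^{-1/q}$ after scaling, and the factor $h_K$ from $|\hat w|_{1,\hat K}$ restores $h_K^p$.

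For \eqref{EKagvgw}, reusing your scheme with $\partial\hat w$ in place of $\hat w$ would go wrong. The Poincar\'e step on the zero-mean part of $\partial\hat w$ produces $|\partial\hat w|_{1,\hat K}$, which scales to $h_K|w|_{2,K}$ rather than $|w|_{1,K}$. In fact no splitting is needed. Since $\widehat{\partial_j w}\in P_{p-1}(\hat K)$, the functional $\hat\varphi\mapsto E_{\hat K}(\hat\varphi\,\widehat{\partial_j w})$ vanishes on all of $P_{p-1}$; this is what your degree-$(2p-2)$ remark should be used for. Lemma~\ref{lemBL} with $m=p$ then gives the bound $|\hat\al\,\widehat{\partial_i v}|_{p,\infty,\hat K}\norm{\widehat{\partial_j w}}_{0,\hat K}$ directly, which is exactly Ciarlet's Theorem 4.1.4.
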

\begin{proof} For \eqref{EKagvgw} and \eqref{EKfw}, we refer to Theorems 4.1.4 and 4.1.5 in \cite{Ci78}, respectively. 

Next we prove \eqref{EKbvw} by following the arguments in the proof of \cite[Lemma~6.1]{BanerjeeOsborn90} or \cite[Theorem 4.1.5]{Ci78}. 
Let $\hat{\varPi}_0$ be the orthogonal projection from $L^2(\hat{K})$ onto $P_0(\hat{K})$. For any $v,w\in P_{p}(K)$, we have
\[ E_{\hat K}(\hat{\beta}\hat{v}\hat{w})= E_{\hat K}(\hat{\beta}\hat{v}\hat{\varPi}_0\hat{w})+ E_{\hat K}(\hat{\beta}\hat{v}(\hat{w}-\hat{\varPi}_0\hat{w})).\]
Since $ E_{\hat K}(\cdot)$ vanishes over the space $P_{p-1}(\hat{K})$ and $\norm{\hat{\varPi}_0\hat{w}}_{0,\hat{K}}\le\norm{\hat{w}}_{0,\hat{K}}$, from Lemma~\ref{lemBL} and the equivalence of norms on finite dimensional spaces, we have
\begin{align*}
| E_{\hat K}(\hat{\beta}\hat{v}\hat{\varPi}_0\hat{w})|&\lesssim |\hat{\beta}\hat{v}\hat{\varPi}_0\hat{w}|_{p,\infty,\hat{K}}\lesssim|\hat{\beta}\hat{v}|_{p,\infty,\hat{K}}\norm{\hat{\varPi}_0\hat{w}}_{0,\hat{K}}\lesssim\hspace{-0.1cm}\bigg(\sum_{j=0}^{p}|\hat{\beta}|_{p-j,\infty,\hat{K}}|\hat{v}|_{j,\hat{K}}\bigg)\norm{\hat{w}}_{0,\hat{K}}.
\end{align*}
It is clear that $E_{\hat K}(\hat{\beta}\hat{v}(\hat{w}-\hat{\varPi}_0\hat{w}))=0$ when $p=1$. For $p\ge 2$, given $\hat{w}\in P_{p}(\hat{K})$,  the linear form $ E_{\hat K}((\hat{w}-\hat{\varPi}_0\hat{w})(\cdot))$ is bounded on $W^{p-1,\infty}(\hat{K})$: $| E_{\hat K}((\hat{w}-\hat{\varPi}_0\hat{w})(\cdot))|\ls  \norm{\hat{w}-\hat{\varPi}_0\hat{w}}_{0,\hat K}\norm{\cdot}_{p-1,\infty,\hat K}$, and it vanishes over $P_{p-2}(\hat{K})$. By Lemma~\ref{lemBL}, we have
\eqn{| E_{\hat K}(\hat{\beta}\hat{v}(\hat{w}-\hat{\varPi}_0\hat{w}))|\ls |\hat{\beta}\hat{v}|_{p-1,\infty,\hat{K}}\|\hat{w}-\hat{\varPi}_0\hat{w}\|_{0,\hat{K}}\lesssim\bigg(\sum_{j=0}^{p-1}|\hat{\beta}|_{p-1-j,\infty,\hat{K}}|\hat{v}|_{j,\hat{K}}\bigg)|\hat{w}|_{1,\hat{K}}.}
Therefore, by the scaling properties in \eqref{EKhEK} and Lemma~\ref{lemScale}, we conclude that
\eqn{&|E_K(\beta vw)|\lesssim |K|\abs{E_{\hat K}(\hat{\beta}\hat{v}\hat{w})}\\
\ls &h^p\norm{\beta}_{p,\infty,K}\norm{v}_{p,K}\norm{w}_{0,K}+h^p\norm{\beta}_{p-1,\infty,K}\norm{v}_{p-1,K}|w|_{1,K},}
which implies that \eqref{EKbvw} holds. This completes the proof of the lemma.
\end{proof}
\begin{remark}
Similar results on $|E_K(\al\nabla v\cdot\nabla w)|$ are given in \cite[Lemma~3.4]{bbl11} and  \cite[Lemma~6.1]{BanerjeeOsborn90}  by assuming that $E_{\hat K}(\hat{\varphi})=0\;\forall \hat{\varphi}\in P_{2p-1}(\hat{K}).$ 
\end{remark}

Furthermore, we also need to account for the influence of numerical integration on $\Ga_2$. 
\begin{lemma}\label{lem:gw}
  Suppose that $E_{\hat{e}}(\hat{\varphi})=0\;\forall\hat{\varphi}\in P_{2p-1}(\hat{e})$ and  $r\in [1,\infty]$ satisfies $p-\frac{n-1}{r}>0$.  Then for any face $e\in\mathcal{F}_h$ and $v,w\in P_p(e).$
  \begin{align}
  |E_e(\ga vw)|&\lesssim h_e^{p}\norm{\gamma}_{p,\infty,e}\norm{v}_{p,e}\norm{w}_{0,e}\quad \forall\gamma\in W^{p,\infty}(e),\label{gw1b-1}\\
  |E_e(gw)|&\lesssim h_e^p|e|^{\frac12-\frac1{r}}\norm{g}_{p,r,e}\norm{w}_{0,e}\quad\forall  g\in W^{p,r}(e).\label{gw1b}
  \end{align}
\end{lemma}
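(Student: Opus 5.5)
We work on the reference face $\hat e$ and transfer back using \eqref{EKhEK} and Lemma~\ref{lemScale} (applied on the $(n-1)$-dimensional face). The argument mirrors the proof of \eqref{EKbvw}. Because the face rule is exact on $P_{2p-1}(\hat e)$, we can use $p$ derivatives for the smooth factor even when it is multiplied by a full degree-$p$ polynomial. This is why no $H^1$-seminorm of $w$ appears on the right, in contrast to \eqref{EKbvw}.

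\textbf{Estimate \eqref{gw1b}.} Fix $\hat w\in P_p(\hat e)$ and consider the linear form $\hat\varphi\mapsto E_{\hat e}(\hat\varphi\hat w)$. Since $p-\frac{n-1}{r}>0$, the Sobolev embedding $W^{p,r}(\hat e)\hookrightarrow C^0(\hat e)$ gives the bound
\[
|E_{\hat e}(\hat\varphi\hat w)|\ls \norm{\hat\varphi}_{C^0}\norm{\hat w}_{C^0}\ls \norm{\hat\varphi}_{p,r,\hat e}\norm{\hat w}_{0,\hat e},
\]
using equivalence of norms on $P_p(\hat e)$. The form vanishes on $P_{p-1}(\hat e)$, because $\hat\varphi\hat w\in P_{2p-1}$ there. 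Lemma~\ref{lemBL} then yields $|E_{\hat e}(\hat g\hat w)|\ls |\hat g|_{p,r,\hat e}\norm{\hat w}_{0,\hat e}$. To scale back, we use $|E_e(gw)|=\frac{|e|}{|\hat e|}|E_{\hat e}(\hat g\hat w)|$, together with $|\hat g|_{p,r,\hat e}\ls h_e^p|e|^{-1/r}|g|_{p,r,e}$ and $\norm{\hat w}_{0,\hat e}\ls |e|^{-1/2}\norm{w}_{0,e}$. Combining these gives exactly $h_e^p|e|^{\frac12-\frac1r}\norm{g}_{p,r,e}\norm{w}_{0,e}$.

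\textbf{Estimate \eqref{gw1b-1}.} Fix $\hat w\in P_p(\hat e)$ and look at the form $\hat\psi\mapsto E_{\hat e}(\hat\psi\hat w)$ on $W^{p,\infty}(\hat e)$. It is bounded by $\norm{\hat\psi}_{0,\infty}\norm{\hat w}_{0,\hat e}$ and vanishes on $P_{p-1}$. Lemma~\ref{lemBL} gives $|E_{\hat e}(\hat\gamma\hat v\hat w)|\ls |\hat\gamma\hat v|_{p,\infty,\hat e}\norm{\hat w}_{0,\hat e}$. By the Leibniz rule, since $|\hat v|_{j,\infty,\hat e}\ls|\hat v|_{j,\hat e}$ on the finite-dimensional space, this is
\[
\ls \sum_{j=0}^{p}|\hat\gamma|_{p-j,\infty,\hat e}|\hat v|_{j,\hat e}\norm{\hat w}_{0,\hat e}.
\]
For scaling, note $|\hat\gamma|_{p-j,\infty,\hat e}\ls h_e^{p-j}|\gamma|_{p-j,\infty,e}$, $|\hat v|_{j,\hat e}\ls h_e^{j}|e|^{-1/2}|v|_{j,e}$, and $\norm{\hat w}_{0,\hat e}\ls|e|^{-1/2}\norm{w}_{0,e}$. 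The factors $|e|^{-1}$ cancel against $|e|/|\hat e|$, and every term carries $h_e^p$. This gives $h_e^p\norm{\gamma}_{p,\infty,e}\norm{v}_{p,e}\norm{w}_{0,e}$.

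\textbf{Main point to check.} The only delicate part is the scaling. The face is $(n-1)$-dimensional, so Lemma~\ref{lemScale} must be applied with $|e|\eqsim h_e^{n-1}$, and for the mesh faces under consideration $\rho_e\eqsim h_e$ by shape regularity. It is also necessary that the Sobolev embedding condition uses $n-1$, as stated. For $n=1$ there are no faces, so the lemma is vacuous.
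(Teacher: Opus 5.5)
Your proposal is correct and follows the paper's route. For \eqref{gw1b} the paper argues exactly as you do: the embedding $W^{p,r}(\hat e)\hookrightarrow C^0(\hat e)$, then Bramble--Hilbert applied to $\hat g\mapsto E_{\hat e}(\hat g\hat w)$, which vanishes on $P_{p-1}(\hat e)$, then scaling. For \eqref{gw1b-1} the paper only cites \cite[Lemma~3.2]{BanerjeeOsborn90}. Your direct argument is a valid self-contained substitute: the functional $\hat\psi\mapsto E_{\hat e}(\hat\psi\hat w)$ vanishes on $P_{p-1}(\hat e)$ because $(p-1)+p=2p-1$, and the Leibniz rule plus scaling finish it. Your shape-regularity remark is not needed, since you only use the direction $e\to\hat e$ of Lemma~\ref{lemScale}.
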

\begin{proof} \eqref{gw1b-1} is proved in \cite[Lemma~3.2]{BanerjeeOsborn90}. 
	We only prove \eqref{gw1b}.  Thanks to the Sobolev embedding theorem (see e.g. \cite[(1.4.6)]{bs08}), $W^{p,r}(\hat{e})\hookrightarrow$ $ C^0(\hat{e})$ holds, where ``$\hookrightarrow$" denotes ``continuously embedding".  We have
\eqn{|E_{\hat{e}}(\hat{g}\hat{w})|\le\|\hat{g}\|_{0,\infty,\hat{e}}\|\hat{w}\|_{0,\infty,\hat{e}}\le||\hat{g}||_{p,r,\hat{e}}\|\hat{w}\|_{0,\infty,\hat{e}}.}
Thus for a fixed $\hat{w}\in P_p(\hat{e})$, the linear functional $E_{\hat{e}}:~\hat{g}\rightarrow E_{\hat{e}}(\hat{g}\hat{w})$ is bounded on $W^{p,r}(\hat e)$ and vanishes over the space $P_{p-1}(\hat{e})$. Now using the Lemma~\ref{lemBL} and the equivalence of norms on finite dimensional spaces, we can establish the following inequality:
	\eqn{|E_{\hat{e}}(\hat{g}\hat{w})|\lesssim|\hat{g}|_{p,r,\hat{e}}\|\hat{w}\|_{0,\hat{e}}.}
Therefore, by the scaling properties in \eqref{EKhEK} and Lemma~\ref{lemScale}, we conclude that
\eqn{
|E_e(gw)|&\ls |e||E_{\hat{e}}(\hat{g}\hat{w})|\ls |e||\hat{g}|_{p,r,\hat{e}}\|\hat{w}\|_{0,\hat{e}}\ls h_e^p|e|^{\frac12-\frac1{r}}|g|_{p,r,e}\|w\|_{0,e}.}
That is, \eqref{gw1b} holds. This completes the proof of the lemma.
\end{proof}
% \begin{remark}
% % Given that the boundary is piecewise linear, the numerical integration on it can be performed using a format from a lower dimension. However, it is important to note that due to the fact that $V_h$ possesses only $H^1$ regularity, we must locally transform the dependent terms $\norm{v_h}_{p,e}$ and $\norm{w_h}_{1,e}$ into $\norm{v_h}_{p,K_e}$ and $\norm{w_h}_{1,K_e}$, by the inverse inequality, here $K_e$ represents one element containing $e$ . Therefore, the algebraic precision of the numerical integration format on $e$ should be at least $2p-1$, 
% \cb{Example~\ref{exm_3D} and Example~\ref{exm_2D} can demonstrate that the assumptions can not be weakened.} 
% \end{remark}
Thus far, we have meticulously scrutinized each term in equation \eqref{Strang1} individually. Now, Drawing upon the preceding lemmas, we present one of the central outcomes of this section.

\begin{theorem}\label{H1-err}
  Suppose that $ E_{\hat K}(\hat{\varphi})=0\;\forall \hat{\varphi}\in P_{2p-2}(\hat{K})$ and $E_{\hat{e}}(\hat\xi)=0\;\forall \hat\xi\in P_{2p-1}(\hat{e})$. Additionally, let $\al,\beta\in W^{p,\infty}(\Omega)$, $\gamma\in W^{p,\infty}(\Ga_2)$,  $f\in W^{p,q}(\Omega)$ for some $q\ge2$ satisfying $p>\frac{n}{q}$, and $g\in W^{p,r}(\Ga_2)$ for some $r\ge2$ satisfying $p>\frac{n-1}{r}$. Let $u\in H^{p+1}(\Om)$ be the solution to the problem \eqref{EP} and let $u_h^*$ be the solution to the discrete problem \eqref{integrationFEM}. Then we have
  \eqn{%\label{u-uh*1}
    \norm{u-u_{h}^{\star}}_{1,\Om}\lesssim h^p\big(&\abs{u}_{p+1,\Om}+(\norm{\al}_{p,\infty,\Om}+\norm{\beta}_{p,\infty,\Om})\norm{u}_{p,\Om}\\
    &+\norm{\gamma}_{p,\infty,\Ga_2}\norm{u}_{p,\Ga_2}+\norm{f}_{p,q,\Om}+\norm{g}_{p,r,\Ga_2}\big).}
\end{theorem}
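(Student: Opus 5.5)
By Lemma~\ref{Vh-ellipticity}, the precision assumption on $E_{\hat K}$ makes $a^{\star}(\cdot,\cdot)$ uniformly $V_h$-elliptic. Hence the Strang-type estimate \eqref{Strang1} of Lemma~\ref{Strang} applies. We take $v_h=I_hu$, the standard Lagrange interpolant of degree $p$; it lies in $V_h$ because $u\in H^{p+1}(\Om)\hookrightarrow C(\bar\Om)$ for $n\le3$. Interpolation theory gives $\norm{u-I_hu}_{1,\Om}\ls h^p\abs{u}_{p+1,\Om}$. It then remains to bound the two consistency terms, $a(I_hu,w_h)-a^{\star}(I_hu,w_h)$ and $F^{\star}(w_h)-F(w_h)$, by $h^p(\cdots)\norm{w_h}_{1,\Om}$.

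\textbf{Volume terms.} Write
\[
a(v_h,w_h)-a^{\star}(v_h,w_h)=\sum_{K}\big[E_K(\al\na v_h\cdot\na w_h)+E_K(\beta v_hw_h)\big]+\sum_{e\in\mathcal F_h}E_e(\ga v_hw_h).
\]
By \eqref{EKagvgw} and \eqref{EKbvw}, the volume part is bounded by $h^p(\norm{\al}_{p,\infty,\Om}+\norm{\beta}_{p,\infty,\Om})\sum_K\norm{I_hu}_{p,K}\norm{w_h}_{1,K}$. By local stability of the interpolant, $\norm{I_hu}_{p,K}\le\norm{u}_{p,K}+\norm{u-I_hu}_{p,K}\ls\norm{u}_{p,K}+h_K\abs{u}_{p+1,K}$. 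Cauchy--Schwarz over $K$ then gives the bound $h^p(\norm{\al}_{p,\infty}+\norm{\beta}_{p,\infty})(\norm{u}_{p,\Om}+h\abs{u}_{p+1,\Om})\norm{w_h}_{1,\Om}$. The product with $h\abs{u}_{p+1}$ can be absorbed, with the data norms counted as constants, or simply kept, since it is of higher order. For $F$, \eqref{EKfw} gives $\sum_K|E_K(fw_h)|\ls h^p\sum_K|K|^{1/2-1/q}\norm{f}_{p,q,K}\norm{w_h}_{1,K}$. Since $q\ge2$, Hölder's inequality for sums with exponents $q$, $2$, and $\frac{2q}{q-2}$ gives $\sum_K |K|^{1/2-1/q}a_Kb_K\le(\sum|K|)^{1/2-1/q}(\sum a_K^q)^{1/q}(\sum b_K^2)^{1/2}$, hence $\ls h^p\norm{f}_{p,q,\Om}\norm{w_h}_{1,\Om}$.

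\textbf{Face terms.} I expect these to be the main obstacle. Lemma~\ref{lem:gw} gives $|E_e(\ga v_hw_h)|\ls h^p\norm{\ga}_{p,\infty,e}\norm{I_hu}_{p,e}\norm{w_h}_{0,e}$. Summing over $e$ requires two ingredients. First, the trace inequality $\norm{w_h}_{0,\Ga_2}\ls\norm{w_h}_{1,\Om}$ applied globally, which avoids the inverse estimates that would cost a power of $h$. Second, control of $\big(\sum_e\norm{I_hu}_{p,e}^2\big)^{1/2}$ by $\norm{u}_{p,\Ga_2}$ plus higher-order terms. On a face $e$, $I_hu|_e$ is the face Lagrange interpolant of $u|_e$, so $\norm{I_hu}_{p,e}\le\norm{u}_{p,e}+\norm{u-I_hu}_{p,e}$. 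The remainder is handled through the element $K_e$ containing $e$, using the trace inequality on $e$ for $D^p(u-I_hu)$: $\norm{u-I_hu}_{p,e}\ls h^{-1/2}\abs{u-I_hu}_{p,K_e}+h^{1/2}\abs{u-I_hu}_{p+1,K_e}\ls h^{1/2}\abs{u}_{p+1,K_e}$. This is fine, although the step needs care because $I_hu\in P_p$ on $K_e$ and $u$ has only $H^{p+1}$ regularity, so its $p$-th derivatives have only $H^{1/2}$ traces. An alternative is to apply the trace theorem only to $u$ and an inverse estimate to $I_hu-\Pi u$. For $g$, \eqref{gw1b} together with the same Hölder argument on $\Ga_2$ (with $r\ge2$) and the trace inequality gives $\ls h^p\norm{g}_{p,r,\Ga_2}\norm{w_h}_{1,\Om}$.

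Collecting these bounds in \eqref{Strang1} yields the stated estimate.
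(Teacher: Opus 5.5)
Your overall structure is the paper's: uniform $V_h$-ellipticity from Lemma~\ref{Vh-ellipticity}, the first Strang lemma, the local quadrature bounds of Lemmas~\ref{apvpvpw-fw} and~\ref{lem:gw}, H\"older's inequality over elements and faces, and the global trace inequality for $\norm{w_h}_{0,\Ga_2}$. The gap is your choice $v_h=I_hu$. The consistency terms require control of $\norm{v_h}_{p,\cTh}$ and $\norm{v_h}_{p,\mathcal F_h}$. With the nodal interpolant you only get $\norm{I_hu}_{p,K}\ls\norm{u}_{p,K}+h_K\abs{u}_{p+1,K}$ and $\norm{I_hu}_{p,e}\ls\norm{u}_{p,e}+h^{1/2}\abs{u}_{p+1,K_e}$. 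So what you actually prove is the stated bound plus
\[
h^{p+1}\big(\norm{\al}_{p,\infty,\Om}+\norm{\beta}_{p,\infty,\Om}\big)\abs{u}_{p+1,\Om}+h^{p+\frac12}\norm{\ga}_{p,\infty,\Ga_2}\abs{u}_{p+1,\Om}.
\]
These products of a coefficient norm with $\abs{u}_{p+1,\Om}$ are not dominated by the right-hand side of the theorem. The paper fixes its constants to be independent of $h$, the data and the solution. Absorbing them ``with the data norms counted as constants'' is therefore not allowed, and keeping them gives a different, Ciarlet-type estimate. Removing exactly this coupling is the point of the theorem, as the remark after it explains.

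Nor can this be fixed by bookkeeping within your route. For $p=1$ and $n=2,3$ the nodal interpolant is not bounded in $H^1$, so no bound $\norm{I_hu}_{1,\cTh}\ls\norm{u}_{1,\Om}$ exists. The same happens on faces for $p=1$, $n=3$. For $p\ge2$ you could repair the $I_hu$ route, since $H^p\hookrightarrow C^0$ for $n\le3$ and Bramble--Hilbert then gives $\abs{u-I_hu}_{p,K}\ls\abs{u}_{p,K}$, and likewise on faces. That still leaves $p=1$ uncovered.

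The missing ingredient, which the paper uses, is an $H^p$-stable quasi-interpolant: take $v_h=\Pi_hu$, the Scott--Zhang interpolant. It satisfies $\norm{u-\Pi_hu}_{1,\Om}\ls h^p\abs{u}_{p+1,\Om}$ and $\norm{\Pi_hu}_{p,\cTh}\ls\norm{u}_{p,\Om}$. If the degrees of freedom at nodes on $\Ga_2$ are defined through faces lying in $\Ga_2$, its trace on $\Ga_2$ is itself a Scott--Zhang interpolant of $u|_{\Ga_2}$. That gives $\norm{\Pi_hu}_{p,\mathcal F_h}\ls\norm{u}_{p,\Ga_2}$ with no $\abs{u}_{p+1}$ contribution. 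With this choice of $v_h$, the rest of your argument, including the H\"older treatment of $f$ and $g$, goes through unchanged.
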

\begin{proof}
  Using Lemmas \ref{apvpvpw-fw} and \ref{lem:gw}, the Cauchy-Schwarz inequality, and the trace inequality ($\norm{v}_{0,\pa\Om}\ls\norm{v}_{1,\Om}$), we have for any $v_h, w_h\in V_h$,
  \begin{align*}
    &|a(v_h,w_h)-a^{\star}(v_h,w_h)|\\
    &\ls \sum_{K\in\cTh}|E_K(\al\nabla v_h\cdot\nabla w_h)|+\sum_{K\in\cTh}|E_K(\beta v_h w_h)|+\sum_{e\in\mathcal{F}_h}|E_e(\ga v_h w_h)|\\
    &\ls h^p\big(\norm{\al}_{p,\infty,\Om}\norm{v_h}_{p,\cTh}+\norm{\beta}_{p,\infty,\Om}\norm{v_h}_{p,\cTh}+\norm{\gamma}_{p,\infty,\Ga_2}\norm{v_h}_{p,\mathcal{F}_h}\big)\|w_h\|_{1,\Om}.
  \end{align*}
 Likewise, from \eqref{EKfw}, \eqref{gw1b}, and the H\"older inequality, we conclude that
  \begin{align*}
    |F(w_h)-F^{\star}(w_h)|&\ls \sum_{K\in\cTh}|E_K(fw_h)|+\sum_{e\in\mathcal{F}_h}|E_e(gw_h)|\\
    &\ls h^p|\Om|^{\frac12-\frac1q}\norm{f}_{p,q,\Om}\norm{w_h}_{1,\Om}+h^p|\Ga_2|^{\frac12-\frac1{r}}\norm{g}_{p,r,\Ga_2}\norm{w_h}_{0,\Ga_2}\\
    &\ls h^p\big(\norm{f}_{p,q,\Om}+\norm{g}_{p,r,\Ga_2}\big)\norm{w_h}_{1,\Om}.
  \end{align*}
In addition, let $v_h=\Pi_h u\in V_h$ to be the Scott-Zhang interpolant \cite{sz90} of $u$. From the stability properties of the Scott-Zhang interpolation \cite[Theorem~3.1]{sz90}, we have  $\norm{v_h}_{p,\cTh}\ls\norm{u}_{p,\Om}$. According to \cite{sz90}, the Scott-Zhang interpolation operator can be appropriately defined such that the restriction of $\Pi_h u$ to $\Ga_2$ remains an  interpolation of $u|_{\Ga_2}$,  while preserving stability properties similarly to those of the classical Cl\'ement interpolation \cite{Clement75}. In particular,  $\norm{v_h}_{p,\mathcal{F}_h}\ls\norm{u}_{p,\Ga_2}$.
 By combining the aforementioned analyses with Lemmas \ref{Vh-ellipticity} and \ref{Strang} and the error estimate of $\Pi_h$ \cite[Theorem~4.1]{sz90}, we complete the proof.
\end{proof}

  \begin{remark} For the case of homogeneous Dirichlet boundary (when $\beta=0,\ga=0, g=0, \Ga_2=\emptyset$), the following estimate is given in \cite[Theorem 4.1.6]{Ci78}:
\eqn{\norm{u-u_{h}^{\star}}_{1,\Om}\lesssim h^p\big(\norm{\al}_{p,\infty,\Om}\norm{u}_{p+1,\Om}+\norm{f}_{p,q,\Om}\big).}
Our result extend this estimate to the elliptic problem with more general mixed boundary condition. In particular, when reduced to the case of Dirichlet boundary, our result becomes
\eqn{\norm{u-u_{h}^{\star}}_{1,\Om}\lesssim h^p\big(\abs{u}_{p+1,\Om}+\norm{\al}_{p,\infty,\Om}\norm{u}_{p,\Om}+\norm{f}_{p,q,\Om}\big)} 
 For certain multiscale problems involving oscillatory coefficients and solutions, our result can give better estimate. Our key idea for such an improvement is to take $v_h$ to be the Scott-Zhang interpolant of $u$ instead of finite element interpolant $I_hu$ used in \cite{Ci78}. 
  \end{remark}
%   \begin{remark}\label{insufficientPrecision}
%   In contrast to the numerical integration on $e$, the algebraic precision of the numerical integration on $K$ must not only satisfy the consistency error requirements for $a^\star(\cdot,\cdot)$ and $F^\star(\cdot)$, but also ensure the coercivity of $a^\star(\cdot,\cdot)$. When the algebraic precision of the numerical integration on $K$ falls below $2p-2$, Lemma~\ref{Vh-ellipticity} highlights that the solution to \eqref{integrationFEM} may not be guaranteed to be unique, thus the numerical integration on $K$ has a greater effect on error than on $e$. \co{}
% \end{remark}

  % \begin{proof}
  %   Similar to  \cite[Theorem 4.1.6]{Ci78}, we have that
  %   \eq{\norm{u-u^{\star}_h}_{1,\Om}\lesssim \inf_{v_h\in V_h}\big(\norm{u-v_h}_{1,\Om}+h^p (\norm{\al}_{p,\infty,\Om}+\norm{\beta}_{p,\infty,\Om})(\sum_{K\in\cTh}\norm{v_h}^2_{p,\cb{K}})^\frac12+h^p\norm{f}_{p,q,\Om}\big).\label{ciarlrt-1}}
    
  % \end{proof}

\subsection{$\mathbf{L^2}$-error estimation}\label{errorEstimationL2}
%   In addition to the volume integration, the face integration also impacts the error estimation of the finite element method. In the following discussion, we use the Neumann boundary value condition as an example to investigate the volume integration and face integration conditions that do not influence the error estimations.

%   Consider the problem 
%   \be
%     \label{neumannBD}
%     \begin{cases}
%       -\nabla\cdot(\al\nabla u) + \beta u = f \quad &{\rm in}\quad \Om,\\
%       \al\nabla u\cdot\bbn + \gamma u= g \quad &{\rm on} \quad\Ga.
%     \end{cases}
%   \ee

%  Consequently, the FEM with numerical integration is given by: find $u_{2h}^{\star}\in V_h$, s.t.
% \be\label{integrationFEMneumann}
% a_2^{\star}(u_{2h}^{\star},v_h)=F_2^{\star}(v_h)\quad \forall v_h\in V_h,
% \ee   
% where 

We would like to mention that \cite{CR72} considered the combined effect of curved boundaries and numerical integration for the elliptic problems with homogeneous Dirichlet boundary condition, deriving the $L^2$-error estimate. Although their result is quite general and practical, it is rather complicated. In this section, we focus solely on the effect of numerical integration in FEM for elliptic problems with mixed boundary condition.    

  In order to use the Aubin-Nitsche technique, we consider the following dual problem,
  \be\label{dualityFEM}
    \begin{cases}
      \ba{ll}
      -\nabla\cdot(\al\nabla w)+\beta w=u_h-u_h^{\star}\quad &\text{in}\quad\Om,\\
      w=0\quad &\text{on}\quad\Ga_1,\\
      \alpha\nabla w\cdot\bbn+\gamma w=0\quad &\text{on}\quad\Ga_2.
      \ea
    \end{cases}
  \ee
Subsequently, we can give the following Strang lemma in $L^2$-norm.
  \begin{lemma}\label{strang2}
    Let $w_h$ be the FEM solution of \eqref{dualityFEM}, then we have
    \begin{align}\label{Strangl2}				
      \norm{u-u_h^{\star}}_{0,\Om}\le\norm{u-u_h}_{0,\Om}+\frac{|F(w_h)-F^{\star}(w_h)|}{\norm{w}_{2,\Om}}+\frac{|a^{\star}(u_h^{\star},w_h)-a(u_h^{\star},w_h)|}{\norm{w}_{2,\Om}}.
    \end{align}
  \end{lemma}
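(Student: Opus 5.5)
The plan is to split off the standard Galerkin error and then bound the remaining discrete difference by an Aubin--Nitsche duality argument. By the triangle inequality,
\[
\norm{u-u_h^{\star}}_{0,\Om}\le \norm{u-u_h}_{0,\Om}+\norm{u_h-u_h^{\star}}_{0,\Om},
\]
so it suffices to bound $\norm{u_h-u_h^{\star}}_{0,\Om}$ by the last two terms in \eqref{Strangl2}. Set $e_h:=u_h-u_h^{\star}\in V_h\subset V$, which is the right-hand side of the dual problem \eqref{dualityFEM}.

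First, I write $\norm{e_h}_{0,\Om}^2$ in terms of the dual solution. The weak form of \eqref{dualityFEM} is $a(v,w)=(e_h,v)$ for all $v\in V$. Testing with $v=e_h$ gives $\norm{e_h}_{0,\Om}^2=a(e_h,w)$. Since $w_h$ is the exact-integration FEM solution of \eqref{dualityFEM}, Galerkin orthogonality gives $a(v_h,w-w_h)=0$ for all $v_h\in V_h$. Hence $\norm{e_h}_{0,\Om}^2=a(e_h,w_h)=a(u_h,w_h)-a(u_h^{\star},w_h)$.

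Next, I insert the two discrete problems. From \eqref{FEMformNeumann}, $a(u_h,w_h)=F(w_h)$. From \eqref{integrationFEM} with test function $w_h$, $a^{\star}(u_h^{\star},w_h)=F^{\star}(w_h)$. Therefore
\[
\norm{e_h}_{0,\Om}^2=\big(F(w_h)-F^{\star}(w_h)\big)+\big(a^{\star}(u_h^{\star},w_h)-a(u_h^{\star},w_h)\big).
\]
Dividing by $\norm{e_h}_{0,\Om}$ (the case $e_h=0$ is trivial) reduces the claim to the a priori estimate $\norm{w}_{2,\Om}\lesssim\norm{e_h}_{0,\Om}$. With that estimate, $1/\norm{e_h}_{0,\Om}\lesssim 1/\norm{w}_{2,\Om}$, and \eqref{Strangl2} follows. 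The resulting inequality holds up to the regularity constant, i.e.\ it should be read with ``$\le$'' understood as ``$\lesssim$'', consistent with the convention for $C$.

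The main obstacle is this $H^2$-regularity of the dual problem with mixed Dirichlet--Robin conditions. On a convex polygon with $\al\in W^{1,\infty}$ it is standard for pure Dirichlet or pure Robin/Neumann data. For genuinely mixed data it depends on how $\Ga_1$ and $\Ga_2$ meet: for example, if they meet at a point of $\Ga$ where the boundary is flat (interior angle $\pi$), the solution generically has $r^{1/2}$ singularities and is not in $H^2$. I would therefore state this regularity as a standing assumption, invoked exactly at this step. It is implicitly assumed in the statement, since $\norm{w}_{2,\Om}$ appears there. Everything else is purely algebraic and uses only the two discrete formulations and the symmetry of $a(\cdot,\cdot)$.
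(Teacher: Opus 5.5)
Your argument is correct and follows the paper's proof essentially line for line: the triangle inequality, the duality identity $\norm{u_h-u_h^{\star}}_{0,\Om}^2=a(w_h,u_h-u_h^{\star})$ via symmetry and Galerkin orthogonality, substitution of the two discrete problems, and the regularity bound $\norm{w}_{2,\Om}\lesssim\norm{u_h-u_h^{\star}}_{0,\Om}$. Both of your caveats are apt: the paper's own proof only yields the estimate up to a constant (it writes $\lesssim$), and it simply invokes $H^2$-regularity of the dual problem without justifying it for mixed Dirichlet--Robin data.
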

  \begin{proof}
It can be seen from the regularity of the solution that $\norm{w}_{2,\Om}\lesssim\norm{u_h-u_h^{\star}}_{0,\Om}$. Then from \eqref{dualityFEM}, thanks to the symmetry of the bilinear form $a(\cdot,\cdot)$ and the Galerkin orthogonality $a(w-w_h,v_h)=0\;\forall v_h\in V_h$, we have 
    \begin{align*}					\norm{u_h-u_h^{\star}}_{0,\Om}\norm{w}_{2,\Om}\ls\norm{u_h-u_h^{\star}}_{0,\Om}^2=&a(w,u_h-u_h^{\star})=a(w_h,u_h-u_h^{\star})\\
=&a(w_h,u_h)-a(w_h,u_h^{\star})+a^{\star}(u_h^{\star},w_h)-F^{\star}(w_h)\\
=&F(w_h)-F^{\star}(w_h)+a^{\star}(u_h^{\star},w_h)-a(u_h^{\star},w_h).
\end{align*}
% Let $I_h$ represent the finite element interpolation operator onto $V_h$.				Then we can use the inverse inequality to estimate the discrete $H^2$ norm of $w_h$, 	\begin{align*}
% \norm{w_h}_{2,\cTh}&\ls \norm{w}_{2,\Om}+\norm{w-I_h w}_{2,\cTh}+\norm{I_h w-w_h}_{2,\cTh}\\&\ls \norm{w}_{2,\Om}+h^{-1}\norm{I_h w-w_h}_{1,\cTh}\\
% &\ls \norm{w}_{2,\Om}+h^{-1}(\norm{I_h w-w}_{1,\cTh}+\norm{w-w_h}_{1,\cTh})\\
% &\ls \norm{w}_{2,\Om}.
% \end{align*}
%      Combing these inequalities, the inclusion \eqref{Strangl2} can be deduced.
This together with the triangle inequality completes the proof of the lemma.
\end{proof}

% In the lemma above, we leverage the dual problem to present the Strang lemma for $\|u_h - u_h^{\star}\|_{0,\Omega}$. Exploiting the property of $u_h$ \co{} as the solution to \eqref{dualityFEM} offers valuable assistance to our analysis, thus motivating our focus on $u_h - u_h^*$. Unlike the analysis process for $H^1$-error, here the denominator involves the \sout{discrete} $H^2$-norm of $w$, it is worth noting that the algebraic precision $2p-2$ is not fully exploited in estimating $|a(\cdot,\cdot)-a^{\star}(\cdot,\cdot)|$ in Theorem~\ref{H1-err} (although it is necessary for the overall estimation of $|u-u_h^{\star}|_{1,\Omega}$). Consequently, we can demonstrate that when $p \geq 2$, the algebraic precision of numerical integration required for both $L^2$-error and $H^1$-error remains the same, with only the regularity of the data increasing. However, when $p=1$, the techniques mentioned above are not applicable. Hence, an improvement in the algebraic precision of numerical integration becomes imperative to optimize the $L^2$-error.

Proceeding, the following lemma gives higher order numerical integration errors of volume terms. The proof is similar to that of \cite[Theorem 5]{CR72}, and we sketch it for readability.

\begin{lemma}\label{apvpvpw-fw-D}
% For any $K\in\cTh$, when $p=1$, suppose that $ E_{K}(\varphi)=0$ for all $\varphi\in P_{1}(K)$, $\al,\beta\in W^{2,\infty}(K)$, and $f\in W^{2,q}(K)$, where $q\ge2$. Then, for $v,w\in P_1(K)$, we have
% \begin{align}
% |E_K(\al\nabla v\cdot\nabla w)|&\ls h_K^2\norm{\al}_{2,\infty,K}|v|_{1,K}|w|_{1,K},\label{lemD1}\\
% |E_K(\beta vw)|&\lesssim h_K^{2}\norm{\beta}_{2,\infty,K}\norm{v}_{1,K}\norm{w}_{1,K},\label{lemD2}\\
% |E_K(fw)|&\ls h_K^{2}|K|^{\frac12-\frac1q}\norm{f}_{2,q,K}\norm{w}_{1,K}.\label{lemD3}
% \end{align}

Assume that $E_{K}(\varphi)=0\;\forall \varphi\in P_{l}(K)$  where $l=\max\{p,2p-2\}$ and that $\al,\beta\in W^{p+1,\infty}(K)$, $f\in W^{p+1,q}(K)$ where $q\in[1,\infty] $ satisfies $p-\frac{n}{q}>0$. Then for any $v,w\in P_p(K)$
\begin{align}
|E_K(\al\nabla v\cdot\nabla w)|&\ls h_K^{p+1}\norm{\al}_{p+1,\infty,K}\norm{v}_{p,K}\norm{w}_{2,K},\label{lemD4}\\
|E_K(\beta vw)|&\lesssim h_K^{p+1}\norm{\beta}_{p+1,\infty,K}\norm{v}_{p,K}\norm{w}_{2,K},\label{lemD5}\\
|E_K(fw)|&\ls h_K^{p+1}|K|^{\frac12-\frac1q}\norm{f}_{p+1,q,K}\norm{w}_{2,K}.\label{lemD6}
\end{align}
\end{lemma}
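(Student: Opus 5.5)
The plan is to transfer everything to the reference element $\hat K$ via \eqref{EKhEK}, split the polynomial test factor into a low-order part and a remainder, apply Lemma~\ref{lemBL} to each piece with the appropriate Sobolev index, and then scale back with Lemma~\ref{lemScale}. This is the same mechanism as in the proof of \eqref{EKbvw}, but we trade one extra derivative on the data for one extra power of $h$. The key device is the $L^2(\hat K)$-orthogonal projection $\hat\varPi_1$ onto $P_1(\hat K)$ (and $\hat\varPi_0$ onto $P_0(\hat K)$ where convenient). The remainder $\hat w-\hat\varPi_1\hat w$ is controlled by $|\hat w|_{2,\hat K}$, and after scaling this yields the factor $\norm{w}_{2,K}$ together with the right power of $h_K$.

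For \eqref{lemD6}, write $E_{\hat K}(\hat f\hat w)=E_{\hat K}(\hat f\hat\varPi_1\hat w)+E_{\hat K}(\hat f(\hat w-\hat\varPi_1\hat w))$.
\begin{itemize}
\item First term: for fixed $\hat\varPi_1\hat w$, the map $\hat f\mapsto E_{\hat K}(\hat f\hat\varPi_1\hat w)$ is bounded on $W^{p+1,q}(\hat K)$, using the embedding into $C^0$ guaranteed by $p>n/q$. It vanishes on $P_p$ because $l\ge p$ makes the rule exact on $P_{p+1}$ only when $l\ge p+1$. That is not automatic, so instead I use only exactness on $P_{l}\supseteq P_{p}$. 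Concretely, I take the functional $\hat f\mapsto E_{\hat K}(\hat f\hat\varPi_0\hat w)$, which vanishes on $P_p\subset P_l$, giving $|\hat f|_{p+1,q}\norm{\hat w}_{0}$.
\item Second term: the functional $\hat f\mapsto E_{\hat K}(\hat f(\hat w-\hat\varPi_0\hat w))$ vanishes on $P_{l-p}$. Combined with the $\hat\varPi_1$ splitting, the linear part $\hat\varPi_1\hat w-\hat\varPi_0\hat w$ needs $\hat f$ annihilated on $P_{p-1}\subset P_{l-1}$ and produces $|\hat f|_{p,q}|\hat w|_{1}$. The part $\hat w-\hat\varPi_1\hat w$ needs only $P_{p-2}$ and produces $|\hat f|_{p-1,q}|\hat w|_{2}$, with norms bounded via $W^{p,q}\hookrightarrow C^0$.
\end{itemize}
Scaling then gives $h^{p+1}$ in every piece: $|\hat f|_{p+1-j,q}\,|\hat w|_{j}$ carries $h^{p+1-j}\cdot h^{j}$, times $|K|\cdot|K|^{-1/q}|K|^{-1/2}$, which is $|K|^{1/2-1/q}$. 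This proves \eqref{lemD6}.

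For \eqref{lemD5} I apply the same three-way splitting to $\hat\beta\hat v$ in place of $\hat f$, expanding $|\hat\beta\hat v|_{k,\infty}\ls\sum_j|\hat\beta|_{k-j,\infty}|\hat v|_{j,\hat K}$. Since $\hat v\in P_p$, its derivatives of order above $p$ vanish, so only $\norm{v}_{p,K}$ appears after scaling.

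For \eqref{lemD4}, let $\hat\phi=\hat\al\,\partial\hat v$, which has polynomial part of degree $p-1$, and test against $\partial\hat w\in P_{p-1}$. Split $\partial\hat w=\hat\varPi_0\partial\hat w+(\partial\hat w-\hat\varPi_0\partial\hat w)$.
\begin{itemize}
\item The constant part needs exactness on $P_{p}$ for $\hat\phi$, which holds since $l\ge p$. This gives $|\hat\al\partial\hat v|_{p+1,\infty}\norm{\partial\hat w}_0$, and because $\partial\hat v\in P_{p-1}$ this involves at most $p-1$ derivatives of $\hat v$.
\item The remainder needs $\hat\phi$ annihilated on $P_{l-(p-1)}$, and $l-p+1\ge p-1$, so I get $|\hat\al\partial\hat v|_{p,\infty}|\hat w|_{2}$.
\end{itemize}
Scaling yields $h^{p+1}\norm{\al}_{p+1,\infty}\norm{v}_{p}\norm{w}_{2}$.

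The main obstacle is the index bookkeeping in the constant-part pieces. I must check that exactness up to $l=\max\{p,2p-2\}$ suffices for Lemma~\ref{lemBL} to absorb $p+1$ derivatives. In \eqref{lemD6} the constant piece needs annihilation on $P_p$, which is fine. In \eqref{lemD4} with $p=1$ it needs $P_1$, which is exactly where $l=p$ rather than $2p-2$ is required, and I would verify this case carefully.
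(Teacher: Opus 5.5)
Your proposal is correct and follows essentially the paper's route: transfer to $\hat K$, split the test polynomial with $\hat\varPi_0$/$\hat\varPi_1$, apply Lemma~\ref{lemBL} piece by piece, and scale back. Your extra split $\hat\varPi_0\hat w+(\hat\varPi_1-\hat\varPi_0)\hat w$ for the $f$- and $\beta$-terms is an equivalent substitute for what the paper does, namely applying Lemma~\ref{lemBL} to the whole product $\hat f\hat\varPi_1\hat w$ (exactness on $P_p$ plus the Leibniz rule); it also lets you treat $p=1$ uniformly instead of as a separate case.

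One small correction concerns the $(\hat w-\hat\varPi_1\hat w)$-piece of \eqref{lemD6}. There the functional $\hat f\mapsto E_{\hat K}(\hat f(\hat w-\hat\varPi_1\hat w))$ is in general bounded only on $W^{p,q}(\hat K)$, not on $W^{p-1,q}(\hat K)$; for example, this fails for $p=2$, $n=q=2$. So Lemma~\ref{lemBL} does not directly give $|\hat f|_{p-1,q,\hat K}|\hat w|_{2,\hat K}$. You get $(|\hat f|_{p-1,q,\hat K}+|\hat f|_{p,q,\hat K})|\hat w|_{2,\hat K}$ instead, which is harmless after scaling. This is exactly what the paper obtains via $W^{p,q}(\hat K)\hookrightarrow W^{p-1,\rho}(\hat K)\hookrightarrow C^0(\hat K)$.
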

\begin{proof}
	We first consider the simple case of $p=1$. Since $2-\frac{n}{q}>0$ and $ E_{\hat K}(\hat{\varphi})=0\;  \forall \hat{\varphi}\in P_1(\hat{K})$, with Lemma~\ref{lemBL} we have
\begin{align*}
\abs{ E_{\hat K}(\hat{\al}\hat{v}\hat{w})} \lesssim\abs{\hat{\al}\hat{v}\hat{w}}_{2,\hat{K}}\lesssim\abs{\hat{\al}}_{2,\infty,\hat{K}}\norm{\hat{v}}_{0,\hat{K}}\norm{\hat{w}}_{0,\hat{K}},\quad &\forall v,w\in P_0(K),\\
\abs{ E_{\hat K}(\hat{f}\hat{w})} \lesssim\abs{\hat{f}\hat{w}}_{2,q,\hat{K}}\lesssim\abs{\hat{f}}_{2,q,\hat{K}}\norm{\hat{w}}_{0,\hat{K}}+\abs{\hat{f}}_{1,q,\hat{K}}\abs{\hat{w}}_{1,\hat{K}},\quad&\forall w\in P_1(K),
\end{align*} 	
in the same way, $\forall v,w\in P_1(K)$
\begin{align*}
  \abs{ E_{\hat K}(\hat{\beta}\hat{v}\hat{w})} \lesssim&\abs{\hat{\beta}\hat{v}\hat{w}}_{2,\hat{K}}\lesssim\abs{\hat{\beta}}_{2,\infty,\hat{K}}\norm{\hat{v}}_{0,\hat{K}}\norm{\hat{w}}_{0,\hat{K}}\\
  &+\abs{\hat{\beta}}_{1,\infty,\hat{K}}(\norm{\hat{v}}_{0,\hat{K}}\abs{\hat{w}}_{1,\hat{K}}+\abs{\hat{v}}_{1,\hat{K}}\norm{\hat{w}}_{0,\hat{K}})+\norm{\hat{\beta}}_{0,\infty,\hat{K}}\abs{\hat{v}}_{1,\hat{K}}\abs{\hat{w}}_{1,\hat{K}},
\end{align*}
where we have used the norm equivalence theorem of finite-dimensional spaces on $\hat{K}$ to derive the last inequalities. Then from \eqref{EKhEK} and the scaling argument (see Lemma~\ref{lemScale}), \eqref{lemD4}--\eqref{lemD6} hold for $p=1$. 
				% \[\abs{E_K(\al\nabla u_h^{\star}\cb{\cdot\nabla} w_h)}\lesssim h_{K}^2\abs{\al}_{2,\infty,K}\norm{\nabla u_h^{\star}}_{0,K}\norm{\nabla w_h}_{0,K},\]
				% \[\abs{E_K(fw_h)}\lesssim h_{K}^2\abs{K}^{\frac12-\frac1q}\norm{f}_{2,q,K}\norm{w_h}_{1,K}.\]
				% Hence, \eqref{dirichletL2-1} \cb{follows from Lemma~\ref{Strangl2} and} the H\"older \sout{Cauchy-Schwarz} inequality.

				Next, we turn our attention to the case where $p\ge 2$. Let $\hat{\varPi}_0$ be the orthogonal projection from $L^2(\hat{K})$ onto $P_0(\hat{K})$. For any $v,w\in P_{p-1}(K)$, we have
				\[ E_{\hat K}(\hat{\al}\hat{v}\hat{w})= E_{\hat K}(\hat{\al}\hat{v}\hat{\varPi}_0\hat{w})+ E_{\hat K}(\hat{\al}\hat{v}(\hat{w}-\hat{\varPi}_0\hat{w})).\]
				Since $ E_{\hat K}(\cdot)$ vanishes over the space $P_{p}(\hat{K})$ ($2p-2\ge p$ holds when $p\ge2$), and $\norm{\hat{\varPi}_0\hat{w}}_{0,\hat{K}}\le\norm{\hat{w}}_{0,\hat{K}}$, from Lemma~\ref{lemBL}, we have
				\begin{align*}
| E_{\hat K}(\hat{\al}\hat{v}\hat{\varPi}_0\hat{w})|&\lesssim |\hat{\al}\hat{v}\hat{\varPi}_0\hat{w}|_{p+1,\infty,\hat{K}}\lesssim|\hat{\al}\hat{v}|_{p+1,\infty,\hat{K}}\norm{\hat{\varPi}_0\hat{w}}_{0,\hat{K}}\\
&\ls |\hat{\al}\hat{v}|_{p+1,\infty,\hat{K}}\norm{\hat{w}}_{0,\hat{K}}\lesssim\bigg(\sum_{j=0}^{p-1}|\hat{\al}|_{p+1-j,\infty,\hat{K}}|\hat{v}|_{j,\hat{K}}\bigg)\norm{\hat{w}}_{0,\hat{K}}.
				\end{align*}
For a given $\hat{w}\in P_{p-1}(\hat{K})$, by the norm equivalence theorem,  the linear form $ E_{\hat K}((\hat{w}-\hat{\varPi}_0\hat{w})(\cdot))$ is bounded on $W^{p,\infty}(\hat{K})$: $| E_{\hat K}((\hat{w}-\hat{\varPi}_0\hat{w})(\cdot))|\ls  \norm{\hat{w}-\hat{\varPi}_0\hat{w}}_{0,\hat K}\norm{\cdot}_{p,\infty,\hat K}$, and it vanishes over $P_{p-1}(\hat{K})$, then with the Bramble-Hilbert lemma, we have
				\begin{align*}
					| E_{\hat K}(\hat{\al}\hat{v}(\hat{w}-\hat{\varPi}_0\hat{w}))|\ls |\hat{\al}\hat{v}|_{p,\infty,\hat{K}}\|\hat{w}-\hat{\varPi}_0\hat{w}\|_{0,\hat{K}}\lesssim\bigg(\sum_{j=0}^{p-1}|\hat{\al}|_{p-j,\infty,\hat{K}}|\hat{v}|_{j,\hat{K}}\bigg)|\hat{w}|_{1,\hat{K}}.
				\end{align*}
				Therefore, \eqref{lemD4} can be deduced with \eqref{EKhEK} and Lemma~\ref{lemScale}. Moreover, we can similarly derive \eqref{lemD5} by replacing $\hat{\varPi}_0$ with $\hat{\varPi}_1$, which is the orthogonal projection from $L^2(\hat{K})$ onto $P_1(\hat{K})$.

				Next, when $p\ge 2$, it is easy to see that there exists a number $\rho\ge 1$ satisfying $p-\frac{n}q>p-1-\frac{n}{\rho}>0$ so that the following embeddings hold:
\eqn{W^{p,q}(\hat{K})\hookrightarrow W^{p-1,\rho}(\hat{K})\hookrightarrow C^0(\hat{K}).%\label{embedding}
}
 In the similar way, $\forall \hat w\in P_p(\hat{K})$,
				\begin{align*}
					\abs{ E_{\hat K}(\hat{f}\hat{w})}&\le \abs{ E_{\hat K}(\hat{f}\hat{\varPi}_1\hat{w})}+\abs{ E_{\hat K}(\hat{f}(\hat{w}-\hat{\varPi}_1\hat{w}))}\\
					&\ls \abs{\hat{f}\hat{\varPi}_1\hat{w}}_{p+1,q,\hat{K}}+\abs{\hat{f}}_{p-1,\rho,\hat{K}}\norm{\hat{w}-\hat{\varPi}_1\hat{w}}_{0,\hat{K}}\\
					&\ls \abs{\hat{f}}_{p+1,q,\hat{K}}\norm{\hat{\varPi}_1\hat{w}}_{0,\hat{K}}+\abs{\hat{f}}_{p,q,\hat{K}}\abs{\hat{\varPi}_1\hat{w}}_{1,\hat{K}}+\abs{\hat{f}}_{p-1,\rho,\hat{K}}\abs{\hat{w}}_{2,\hat{K}}\\
					&\ls \abs{\hat{f}}_{p+1,q,\hat{K}}\norm{\hat{w}}_{0,\hat{K}}+\abs{\hat{f}}_{p,q,\hat{K}}\abs{\hat{w}}_{1,\hat{K}}+(\abs{\hat{f}}_{p-1,q,\hat{K}}+\abs{\hat{f}}_{p,q,\hat{K}})\abs{\hat{w}}_{2,\hat{K}}.
				\end{align*}
				The last inequality holds because of $\norm{\hat{\varPi}_1\hat{w}}_{0,\hat{K}}\le\norm{\hat{w}}_{0,\hat{K}}$ and $\abs{\hat{\varPi}_1\hat{w}}_{1,\hat{K}}\le\abs{\hat{w}}_{1,\hat{K}}$. For the second, we know that from the equivalence of norms on finite dimensional spaces and the Poincar\'e inequality,
				\eqn{|\hat{\varPi}_1\hat{w}|_{1,\hat{K}} &=\inf_{q\in P_0(\hat K)}|\hat{\varPi}_1(\hat{w}-q)|_{1,\hat{K}}\ls \inf_{q\in P_0(\hat K)}\|\hat{\varPi}_1(\hat{w}-q)\|_{0,\hat{K}}\\
                &\ls \inf_{q\in P_0(\hat K)}\|\hat{w}-q\|_{0,\hat{K}}\lesssim|\hat{w}|_{1,\hat{K}}.}
				From \eqref{EKhEK} and Lemma~\ref{lemScale} again, we can get \eqref{lemD6}. 
\end{proof}

Furthermore, under enhanced data regularity conditions, the application of face quadrature formulas with \( 2p-1 \) algebraic precision leads to a one-order improvement in convergence rate compared to Lemma~\ref{lem:gw}.

\begin{lemma}\label{lem:gw2}
  Suppose that $E_{\hat{e}}(\hat{\varphi})=0\;\forall\hat{\varphi}\in P_{2p-1}(\hat{e})$. Then for any edge $e\in\mathcal{F}_h$, and $v,w\in P_p(e),$
  \begin{align}
    |E_e(\ga vw)|&\lesssim h_e^{p+1}\norm{\gamma}_{p+1,\infty,e}\norm{v}_{p,e}\norm{w}_{1,e}\quad \forall\gamma\in W^{p+1,\infty}(e),\label{gw1b-2}\\
  |E_e(gw)|&\lesssim h_e^{p+1}\norm{g}_{p+1,e}\norm{w}_{1,e}\quad\forall g\in H^{p+1}(e).\label{gw1b2}
  \end{align}
\end{lemma}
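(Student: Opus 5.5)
We prove both estimates on the reference face $\hat e$ and then scale back to $e$ using \eqref{EKhEK} and Lemma~\ref{lemScale}. The key idea, as in Lemma~\ref{apvpvpw-fw-D}, is to split the test function with an $L^2$-orthogonal projection onto a low-degree space. On the split-off part the quadrature then has enough spare exactness to apply the Bramble--Hilbert Lemma~\ref{lemBL} with one more derivative than in Lemma~\ref{lem:gw}. The difference from the volume case is that the face rule is exact for $P_{2p-1}$, so no increase of precision is needed: $2p-1\ge p$ for every $p\ge1$.

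\textbf{Estimate \eqref{gw1b2}.} Let $\hat\varPi_0$ be the $L^2(\hat e)$-projection onto constants and write
\[E_{\hat e}(\hat g\hat w)=E_{\hat e}(\hat g\hat\varPi_0\hat w)+E_{\hat e}(\hat g(\hat w-\hat\varPi_0\hat w)).\]
For the first term, the map $\hat g\mapsto E_{\hat e}(\hat g\,\hat\varPi_0\hat w)$ is bounded on $H^{p+1}(\hat e)$. This uses the embedding $H^{p+1}(\hat e)\hookrightarrow C^0(\hat e)$, valid since $p+1>(n-1)/2$ for $n\le3$. The map also vanishes on $P_p(\hat e)\subset P_{2p-1}(\hat e)$. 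Lemma~\ref{lemBL} then gives the bound $|\hat g|_{p+1,\hat e}\|\hat w\|_{0,\hat e}$.

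For the second term, fix $\hat w$. The functional $\hat g\mapsto E_{\hat e}(\hat g(\hat w-\hat\varPi_0\hat w))$ is bounded on $H^{p}(\hat e)$, since $p>(n-1)/2$ whenever $n\le 3$ and $p\ge1$. Because $\hat w-\hat\varPi_0\hat w\in P_p$, it vanishes on $P_{p-1}$. Bramble--Hilbert and norm equivalence give the bound $|\hat g|_{p,\hat e}\|\hat w-\hat\varPi_0\hat w\|_{0,\hat e}\lesssim|\hat g|_{p,\hat e}|\hat w|_{1,\hat e}$.

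Scaling back, $|e|\,|\hat g|_{j,\hat e}\|\cdot\|\lesssim h_e^{j}|e|^{1/2}|g|_{j,e}\,|e|^{-1/2}\cdots$ yields $h_e^{p+1}|g|_{p+1,e}\|w\|_{0,e}+h_e^{p}|g|_{p,e}\,h_e|w|_{1,e}$. This is \eqref{gw1b2}.

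\textbf{Estimate \eqref{gw1b-2}.} We proceed as in the proof of \eqref{lemD5}. Split $\hat w=\hat\varPi_0\hat w+(\hat w-\hat\varPi_0\hat w)$.

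The product $\hat\gamma\hat v\hat\varPi_0\hat w$ is integrated exactly when $\hat\gamma\hat v$ is a polynomial of degree $\le 2p-1$. In particular the functional $\hat\gamma\mapsto E_{\hat e}(\hat\gamma\hat v\hat\varPi_0\hat w)$ kills $P_{p-1}$, but we want $p+1$ derivatives. So instead we apply Bramble--Hilbert to the function $\phi=\hat\gamma\hat v\in W^{p+1,\infty}$. The functional $\phi\mapsto E_{\hat e}(\phi)$ vanishes on $P_p\subset P_{2p-1}$, which gives
\[|E_{\hat e}(\hat\gamma\hat v\hat\varPi_0\hat w)|\lesssim|\hat\gamma\hat v|_{p+1,\infty,\hat e}\|\hat w\|_{0,\hat e}\lesssim\sum_{j\le p}|\hat\gamma|_{p+1-j,\infty,\hat e}|\hat v|_{j,\hat e}\|\hat w\|_{0,\hat e}.\]
For the remainder, the functional $\phi\mapsto E_{\hat e}(\phi(\hat w-\hat\varPi_0\hat w))$ vanishes on $P_{p-1}$, since $(p-1)+p=2p-1$. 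Hence it is bounded by $|\hat\gamma\hat v|_{p,\infty,\hat e}|\hat w|_{1,\hat e}$. Scaling as before, using the term-by-term powers of $h_e$ from Lemma~\ref{lemScale}, gives \eqref{gw1b-2} with $\|\gamma\|_{p+1,\infty,e}\|v\|_{p,e}\|w\|_{1,e}$.

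The main obstacle is bookkeeping of the scaling powers, namely verifying that each term carries exactly $h_e^{p+1}$. For \eqref{gw1b2} we must also check that the Sobolev embeddings into $C^0$ hold for $H^{p}(\hat e)$ when $p=1$, $n=3$: here $\dim\hat e=2$ and $H^1\not\hookrightarrow C^0$. In that borderline case we would instead bound the remainder term in $H^{p+1}$ norm. We use $\hat w-\hat\varPi_0\hat w$ and Bramble--Hilbert on $P_p$ against the full $H^{p+1}$ seminorm plus lower-order seminorms, i.e. bound by $\|\hat g\|_{p+1,\hat e}|\hat w|_{1,\hat e}$ minus its $P_{p-1}$-projection. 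This still scales to $h_e^{p+1}\|g\|_{p+1,e}\|w\|_{1,e}$.
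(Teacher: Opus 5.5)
Your argument is correct in substance, but for \eqref{gw1b2} it takes a different route from the paper. The paper splits $\hat w$ with the $L^2(\hat e)$-projection $\hat{\varPi}^{\hat e}$ onto $P_1(\hat e)$ rather than onto constants. For $p=1$ the remainder $\hat w-\hat{\varPi}^{\hat e}\hat w$ then vanishes identically. The first term is handled by Bramble--Hilbert on $H^{p+1}(\hat e)\hookrightarrow C^0(\hat e)$ together with the Leibniz rule for $\hat g$ times the linear function $\hat{\varPi}^{\hat e}\hat w$, which gives $\abs{\hat g}_{p+1,\hat e}\norm{\hat w}_{0,\hat e}+\abs{\hat g}_{p,\hat e}\abs{\hat w}_{1,\hat e}$. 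For $p\ge 2$ the remainder is controlled through $H^p(\hat e)\hookrightarrow C^0(\hat e)$, which holds because $\dim\hat e\le 2$. The $P_1$ split is exactly what avoids the borderline case you hit. With your $P_0$ split the remainder survives at $p=1$, and your main-text claim that $p>(n-1)/2$ for all $n\le 3$, $p\ge 1$ is false at $n=3$, $p=1$, as you yourself notice.

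Your repair works, but as written (``Bramble--Hilbert on $P_p$'', ``plus lower-order seminorms'') it is inaccurate and should be stated precisely. Set $\hat z=\hat w-\hat{\varPi}_0\hat w$. The functional $\hat g\mapsto E_{\hat e}(\hat g\hat z)$ is bounded on $H^{p+1}(\hat e)$ with norm $\ls\norm{\hat z}_{0,\hat e}$, and it vanishes only on $P_{p-1}(\hat e)$, not on $P_p(\hat e)$. Hence
\[
\abs{E_{\hat e}(\hat g\hat z)}\ls\inf_{q\in P_{p-1}(\hat e)}\norm{\hat g-q}_{p+1,\hat e}\norm{\hat z}_{0,\hat e}\ls\big(\abs{\hat g}_{p,\hat e}+\abs{\hat g}_{p+1,\hat e}\big)\abs{\hat w}_{1,\hat e}.
\]
The last step chooses $q$ so that all means of $D^\alpha(\hat g-q)$, $\abs{\alpha}\le p-1$, vanish, and then applies Poincar\'e repeatedly. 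It is essential that no seminorm of order below $p$ survives: a term like $\norm{\hat g}_{0,\hat e}\abs{\hat w}_{1,\hat e}$ would scale only to $h_e\norm{g}_{0,e}\abs{w}_{1,e}$. This version needs only $H^{p+1}(\hat e)\hookrightarrow C^0(\hat e)$, so it is uniform in $p$ and $n$. You can therefore drop the faulty embedding claim and the case distinction altogether, which makes your route slightly more economical than the paper's.

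For \eqref{gw1b-2}, the paper simply cites \cite[Lemma~3.4]{bbl11}. Your direct proof applies Bramble--Hilbert to $\phi=\hat\gamma\hat v$ in $W^{p+1,\infty}(\hat e)$ and $W^{p,\infty}(\hat e)$. Both spaces embed in $C^0$, so no borderline issue arises, and you use exactness on $P_p$ and on $P_{p-1}\cdot P_p\subset P_{2p-1}$. That proof is correct and mirrors the paper's own proof of \eqref{EKbvw}.
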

\begin{proof}
\eqref{gw1b-2} is proved in \cite[Lemma~3.4]{bbl11}. 
	We only prove \eqref{gw1b2}. Let  $\hat{\varPi}^{\hat{e}}$ as the orthogonal projection of $L^2(\hat{e})$ onto $P_1(\hat{e})$, then decompose $E_{\hat{e}}(\hat{g}\hat{w})$ into two parts,
	\[E_{\hat{e}}(\hat{g}\hat{w})=E_{\hat{e}}(\hat{g}\hat{\varPi}^{\hat{e}}\hat{w})+E_{\hat{e}}(\hat{g}(\hat{w}-\hat{\varPi}^{\hat{e}}\hat{w})).\] 

	We first consider the term $E_{\hat{e}}(\hat{g}\hat{\varPi}^{\hat{e}}\hat{w})$. Thanks to the Sobolev embedding theorem (see e.g. \cite[(1.4.6)]{bs08}), $H^{p+1}(\hat{e})$ $(\hat{K})\hookrightarrow C^0(\hat{e})$ holds, and hence $E_{\hat{e}}$ is bounded on $H^{p+1}(\hat{e})$. Noticing that $E_{\hat{e}}(\hat{\varphi})=0~\forall \hat\varphi\in P_{p}(\hat{e})$,  with Lemma~\ref{lemBL}, we have
	\[|E_{\hat{e}}(\hat{g}\hat{\varPi}^{\hat{e}}\hat{w})|\lesssim|\hat{g}\hat{\varPi}^{\hat{e}}\hat{w}|_{p+1,\hat{e}}.\]
	Further we have $\norm{\hat{\varPi}^{\hat{e}}\hat{w}}_{0,\hat{e}}\le\norm{\hat{w}}_{0,\hat{e}}$, and from the equivalence of norms on finite dimensional spaces and the Poincar\'e inequality,
	\eqn{|\hat{\varPi}^{\hat{e}}\hat{w}|_{1,\hat{e}} =\inf_{q\in P_0(\hat e)}|\hat{\varPi}^{\hat{e}}(\hat{w}-q)|_{1,\hat{e}}\ls \inf_{q\in P_0(\hat e)}\|\hat{\varPi}^{\hat{e}}(\hat{w}-q)\|_{0,\hat{e}}\ls \inf_{q\in P_0(\hat e)}\|\hat{w}-q\|_{0,\hat{e}}\lesssim|\hat{w}|_{1,\hat{e}},}
 	Thus, upon combining all our previous inequalities, we get
	\begin{align*}
		|E_{\hat{e}}(\hat{g}\hat{\varPi}^{\hat{e}}\hat{w})|&\le |\hat{g}|_{p+1,\hat{e}}\|\hat{\varPi}^{\hat{e}}\hat{w}\|_{0,\infty,\hat{e}}+|\hat{g}|_{p,\hat{e}}|\hat{\varPi}^{\hat{e}}\hat{w}|_{1,\infty,\hat{e}}\\
		&\le |\hat{g}|_{p+1,\hat{e}}\|\hat{\varPi}^{\hat{e}}\hat{w}\|_{0,\hat{e}}+|\hat{g}|_{p,\hat{e}}|\hat{\varPi}^{\hat{e}}\hat{w}|_{1,\hat{e}}\\
		&\ls |\hat{g}|_{p+1,\hat{e}}\|\hat{w}\|_{0,\hat{e}}+|\hat{g}|_{p,\hat{e}}|\hat{w}|_{1,\hat{e}}.
	\end{align*}

	It is noteworthy that when $p=1$, the difference $\hat{w}-\hat{\varPi}^{\hat{e}}\hat{w}$ becomes zero. Consequently, we can proceed with the assumption that $p\ge2$. From the Sobolev embedding theorem we have
  \eqn{ H^p(\hat{e})\hookrightarrow C^0(\hat{e}).}
  Proceeding with the familiar arguments, we find that 
	\[|E_{\hat{e}}(\hat{g}(\hat{w}-\hat{\varPi}^{\hat{e}}\hat{w}))|\le\|\hat{g}\|_{0,\infty,\hat{e}}\|\hat{w}-\hat{\varPi}^{\hat{e}}\hat{w}\|_{0,\infty,\hat{e}}\le||\hat{g}||_{p,\hat{e}}\|\hat{w}-\hat{\varPi}^{\hat{e}}\hat{w}\|_{0,\infty,\hat{e}}.\]
	Thus for a fixed $\hat{w}\in P_{p}(\hat{e})$, the linear functional $E_{\hat{e}}:~\hat{g}\rightarrow E_{\hat{e}}(\hat{g}(\hat{w}-\hat{\varPi}^{\hat{e}}\hat{w}))$ is bounded and vanishes over the space $P_{p-1}(\hat{e})$. Now using the Lemma~\ref{lemBL}, the equivalence of norms on finite dimensional spaces,  and the fact that $\hat{\varPi}^{\hat{e}}$ leaves $P_0(\hat{e})$ invariant, we can establish the following inequality:
	\eqn{|E_{\hat{e}}(\hat{g}(\hat{w}-\hat{\varPi}^{\hat{e}}\hat{w}))|&\lesssim|\hat{g}|_{p,\hat{e}}\|\hat{w}-\hat{\varPi}^{\hat{e}}\hat{w}\|_{0,\hat{e}}\lesssim|\hat{g}|_{p,\hat{e}}|\hat{w}|_{1,\hat{e}}.}
	%Here we also use the embedding inequality $W^{p+1,q}(\hat{e})\hookrightarrow W^{p,\rho}(\hat{2})$, which can be deduced similar to \eqref{embedding}.
	Combining all previous inequalities and  the equivalence of norms on finite dimensional spaces, 
	and hence the proof of the lemma is concluded by using Lemma~\ref{lemScale}.
\end{proof}

In addition to the consistency error of Lemma~\ref{strang2} needing to achieve an order of $p+1$ (so as not to affect the $L^2$ error), the conditions of Theorem~\ref{H1-err} required for the stability estimate of $ \norm{u_{h}^\star}_{p,\cTh} $ must also be satisfied. Therefore, we encapsulate these specific conclusions in the following theorem.

\begin{theorem}\label{L2-err}
  We assume that $ E_{\hat K}(\hat{\varphi})=0$\; $\forall\hat{\varphi}\in P_{l}(\hat{K})$, where $l=\max\{p,2p-2\}$, and that $E_{\hat{e}}(\hat\xi)\;\forall \hat\xi\in P_{2p-1}(\hat{e})$. Additionally, let $\al$, $\beta\in W^{p+1,\infty}(\Omega)$, $\gamma\in W^{p+1,\infty}(\Ga_2)$, $f\in W^{p+1,q}(\Omega)$ for some $q\ge2$ satisfying $p>\frac{n}{q}$, and $g\in H^{p+1}(\Ga_2)$. Then, if $u\in H^{p+1}(\Om)$,  we have
  \begin{align*}
    \norm{u-u_{h}^{\star}}_{0,\Om}&\lesssim h^{p+1}\big(1+h(\norm{\al}_{p+1,\infty,\Om}+\norm{\beta}_{p+1,\infty,\Om})+h^\frac12\norm{\ga}_{p+1,\infty,\Ga_2}\big)\\
    &\quad\times\big(\abs{u}_{p+1,\Om}+(\norm{\al}_{p+1,\infty,\Omega}+\norm{\beta}_{p+1,\infty,\Omega})\norm{u}_{p,\Om}+\norm{\ga}_{p+1,\infty,\Ga_2}\norm{u}_{p,\Ga_2}\\
    &\qquad+\norm{f}_{p+1,q,\Om}+\norm{g}_{p+1,\Ga_2}\big).
  \end{align*} 
\end{theorem}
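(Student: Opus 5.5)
We start from the $L^2$ Strang lemma, Lemma~\ref{strang2}. The first term $\norm{u-u_h}_{0,\Om}$ is bounded by $h^{p+1}|u|_{p+1,\Om}$ through \eqref{neumann-uh-error}. The remaining two consistency terms are handled element by element and face by face, using the higher-order quadrature estimates of Lemma~\ref{apvpvpw-fw-D} and Lemma~\ref{lem:gw2}.

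\textbf{Load consistency.} For $F(w_h)-F^\star(w_h)$ we sum \eqref{lemD6} over $K\in\cTh$ and \eqref{gw1b2} over $e\in\mathcal{F}_h$, then apply H\"older. This gives
\[|F(w_h)-F^\star(w_h)|\ls h^{p+1}\big(\norm{f}_{p+1,q,\Om}\norm{w_h}_{2,\cTh}+\norm{g}_{p+1,\Ga_2}\norm{w_h}_{1,\mathcal{F}_h}\big).\]

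\textbf{Bilinear-form consistency.} For $a^\star(u_h^\star,w_h)-a(u_h^\star,w_h)$ we use \eqref{lemD4}, \eqref{lemD5} and \eqref{gw1b-2} with $v=u_h^\star$ and $w=w_h$. This gives
\[h^{p+1}\big[(\norm{\al}_{p+1,\infty}+\norm{\beta}_{p+1,\infty})\norm{u_h^\star}_{p,\cTh}\norm{w_h}_{2,\cTh}+\norm{\ga}_{p+1,\infty,\Ga_2}\norm{u_h^\star}_{p,\mathcal{F}_h}\norm{w_h}_{1,\mathcal{F}_h}\big].\]

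\textbf{Stability of the dual discrete solution.} We need $\norm{w_h}_{2,\cTh}+\norm{w_h}_{1,\mathcal{F}_h}\ls\norm{w}_{2,\Om}$. Let $I_h$ be the nodal interpolant. By the inverse inequality and the $H^1$ error bound for $w_h$,
\[\norm{w_h}_{2,\cTh}\le\norm{I_hw}_{2,\cTh}+Ch^{-1}\norm{I_hw-w_h}_{1,\Om}\ls\norm{w}_{2,\Om}.\]
For faces, a trace inequality on elements together with the inverse inequality gives $\norm{w_h-I_hw}_{1,\mathcal{F}_h}\ls h^{-1/2}\norm{w_h-I_hw}_{1,\Om}\ls h^{1/2}\norm{w}_{2,\Om}$. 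In addition, $\norm{I_hw}_{1,\mathcal{F}_h}\ls\norm{w}_{2,\Om}$ by the trace theorem and interpolation estimates on faces. Dividing by $\norm{w}_{2,\Om}$ removes $w_h$ entirely.

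\textbf{Discrete stability of $u_h^\star$ (main obstacle).} We must bound $\norm{u_h^\star}_{p,\cTh}$ and $\norm{u_h^\star}_{p,\mathcal{F}_h}$ by data norms; this is where the factor $(1+h\cdots+h^{1/2}\cdots)$ arises. Take $\Pi_hu$, the Scott--Zhang interpolant used in Theorem~\ref{H1-err}, and write $u_h^\star=\Pi_hu+(u_h^\star-\Pi_hu)$. The first part is stable: $\norm{\Pi_hu}_{p,\cTh}\ls\norm{u}_{p,\Om}$ and $\norm{\Pi_hu}_{p,\mathcal{F}_h}\ls\norm{u}_{p,\Ga_2}$.

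For the second part we use inverse inequalities, $\norm{u_h^\star-\Pi_hu}_{p,\cTh}\ls h^{1-p}\norm{u_h^\star-\Pi_hu}_{1,\Om}$. On faces, the inverse and trace inequalities give $\norm{\cdot}_{p,\mathcal{F}_h}\ls h^{1-p}h^{-1/2}\norm{\cdot}_{1,\Om}$. We then bound $\norm{u_h^\star-\Pi_hu}_{1,\Om}\ls h^p(\cdots)$ by Theorem~\ref{H1-err} and the Scott--Zhang estimate. Its hypotheses hold here because precision $\max\{p,2p-2\}\ge 2p-2$ and the data regularity is stronger.

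This yields
\[\norm{u_h^\star}_{p,\cTh}\ls\norm{u}_{p,\Om}+h\,\mathcal{D},\qquad \norm{u_h^\star}_{p,\mathcal{F}_h}\ls\norm{u}_{p,\Ga_2}+h^{1/2}\mathcal{D},\]
where $\mathcal{D}$ denotes the data bracket in the statement. The $W^{p,\cdot}$ norms there are dominated by the $W^{p+1,\cdot}$ norms, and $\norm{g}_{p,r,\Ga_2}$ is controlled by $\norm{g}_{p+1,\Ga_2}$ with $r=2$; for $n=3$, $p=1$, Sobolev embedding on faces lets us take a suitable $r$.

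\textbf{Assembly.} Inserting these bounds into the consistency estimates produces exactly the product form $h^{p+1}(1+h(\norm{\al}+\norm{\beta})+h^{1/2}\norm{\ga})\,\mathcal{D}$. The delicate points are the $h^{-1/2}$ loss in the face inverse/trace step, which sets the $h^{1/2}$ weight on $\ga$, and checking that the embedding requirements in Theorem~\ref{H1-err} are met by the given data.
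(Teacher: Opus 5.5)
Your proposal is correct and follows essentially the same route as the paper: the $L^2$ Strang lemma, the higher-order consistency bounds of Lemmas~\ref{apvpvpw-fw-D} and~\ref{lem:gw2}, broken-norm stability of $w_h$, and the bounds $\norm{u_h^\star}_{p,\cTh}\ls\norm{u}_{p,\Om}+h^{1-p}\norm{u-u_h^\star}_{1,\Om}$ and $\norm{u_h^\star}_{p,\mathcal{F}_h}\ls\norm{u}_{p,\Ga_2}+h^{1/2-p}\norm{u-u_h^\star}_{1,\Om}$, obtained from the Scott--Zhang interpolant and closed with Theorem~\ref{H1-err}. The differences are cosmetic: you use the nodal interpolant for $w$, you apply a direct discrete trace/inverse step instead of passing through $H^{1/2}(\Ga_2)$, and you explicitly choose some $r>2$ for $g$ when $n=3$, $p=1$, a point the paper leaves implicit.
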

\begin{proof}
  % Let $w$ denote the solution to the dual problem of equation \eqref{dualityFEM}, and $w_{h}$ represent the Corresponding FE solution, where the right-hand side is given by $u_{h}-u_{h}^{\star}$, and the mixed boundary condition is homogeneous. \cb{Similar to \eqref{Strangl2}, we have
  % \begin{align*}				
  %   \norm{u_{h}-u_{h}^{\star}}_{0,\Om}\le\frac{|F(w_{h})-F^{\star}(w_{h})|}{\norm{w}_{2,\Om}}+\frac{|a^{\star}(u_{h}^{\star},w_{h})-a(u_{h}^{\star},w_{h})|}{\norm{w}_{2,\Om}}.
  % \end{align*}}
  % \cb{We have already analyzed the consistency error resulting from numerical integration with respect to $K$ in Theorem~\ref{apvpvpw-fw-D}. }
According to Lemma~\ref{strang2}, we first estimate the consistency error terms on $K$. From  Lemma~\ref{apvpvpw-fw-D} and the Cauchy-Schwarz inequality, we have
\begin{align}\label{EK_L2}
\sum_{K\in\cTh}|E_K(\al\nabla u_h^\star\cdot\nabla w_h)|&\ls h^{p+1}\norm{\al}_{p+1,\infty,\Omega}\norm{u_h^\star}_{p,\cTh}\norm{w_h}_{2,\cTh},\\
\sum_{K\in\cTh}|E_K(\beta u_h^\star w_h)|&\lesssim h^{p+1}\norm{\beta}_{p+1,\infty,\Omega}\norm{u_h^\star}_{p,\cTh}\norm{w_h}_{2,\cTh},\\
\sum_{K\in\cTh}|E_K(f w_h)|&\ls h^{p+1}|\Omega|^{\frac12-\frac1q}\norm{f}_{p+1,q,\Om}\norm{w_h}_{2,\cTh}.
\end{align}
Based on Lemma~\ref{lem:gw2}, we can further derive the consistency error estimates on $e$:
  \begin{align}
    \sum_{e\in\mathcal{F}_h}|E_e(g w_{h})|&\lesssim h^{p+1} |\Ga_2|^{\frac12-\frac1q}\norm{g}_{p+1,\Ga_2}\norm{w_{h}}_{1,\Ga_2},\\
    \sum_{e\in\mathcal{F}_h}|E_e(\ga u_{h}^\star w_{h})|&\lesssim h^{p+1} \norm{\ga}_{p+1,\infty,\Ga_2}\norm{u^\star_{h}}_{p,\mathcal{F}_h} \norm{w_{h}}_{1,\Ga_2}.\label{Ee_L2}
  \end{align}
Next we estimate the terms in the right hand sides of the above estimates involving $u_h^\star$ and $w_h$. Let $\Pi_hu\in V_h$ be the Scott-Zhang interpolant of $u$. From the inverse inequality,  \cite[Theorem~4.1]{sz90}, and Theorem~\ref{H1-err}, we have
\eqn{
\norm{u_h^{\star}-\Pi_hu}_{p,\cTh}&\ls h^{1-p}\norm{\Pi_h u-u_h^{\star}}_{1,\Om}\ls h^{1-p}\big(\norm{u-\Pi_h u}_{1,\Om}+\norm{u-u_h^{\star}}_{1,\Om}\big)\notag\\
&\ls \abs{u}_{p,\Om}+h^{1-p}\norm{u-u_h^{\star}}_{1,\Om}.
}
Similarly, we have
\eqn{
\norm{u_h^{\star}-\Pi_hu}_{p,\mathcal{F}_h}&\ls h^{\frac12-p}\norm{\Pi_h u-u_h^{\star}}_{\frac12,\Ga_2}\ls h^{\frac12-p}\big(\norm{u-\Pi_h u}_{\frac12,\Ga_2}+\norm{u-u_h^{\star}}_{1,\Om}\big)\notag\\
&\ls \abs{u}_{p,\Ga_2}+h^{\frac12-p}\norm{u-u_h^{\star}}_{1,\Om}.
}
Therefore, by the triangle inequality and the stability of the Scott-Zhang interpolation \cite[Theorem~3.1]{sz90}, we conclude that
\eq{\label{uh*p}
\norm{u_h^{\star}}_{p,\cTh}\ls  \norm{u}_{p,\Om}+h^{1-p}\norm{u-u_h^{\star}}_{1,\Om}\quad\text{and}\quad \norm{u_h^{\star}}_{p,\mathcal{F}_h}\ls \norm{u}_{p,\Ga_2}+h^{\frac12-p}\norm{u-u_h^{\star}}_{1,\Om}.}
In the same way, we can get 
\eq{\label{wh2Om}\norm{w_h}_{2,\cTh}&\ls\norm{\Pi_h w}_{2,\cTh}+h^{-1}\norm{w_h-\Pi_h w}_{1\Om} \ls \norm{w}_{2,\Om},\\
   \norm{w_{h}}_{1,\Ga_2}&\lesssim \norm{\Pi_h w}_{1,\Ga_2} + h^{-\frac12}\norm{w_{h}-\Pi_h w}_{1,\Om} \lesssim\norm{w}_{2,\Om}. \label{wh1Ga}
.}
  Then combining with Lemma~\ref{strang2},  the estimates \eqref{EK_L2}--\eqref{wh1Ga}, and \eqref{neumann-uh-error}, we obtain
\eqn{
\norm{u-u_h^\star}_{0,\Om}&\ls\norm{u-u_h}_{0,\Om}+h^{p+1}\big(\norm{f}_{p+1,q,\Om}+\norm{g}_{p+1,\Ga_2}\\
&\quad+(\norm{\al}_{p+1,\infty,\Omega}+\norm{\beta}_{p+1,\infty,\Omega})\norm{u_h^\star}_{p,\cTh}+\norm{\ga}_{p+1,\infty,\Ga_2}\norm{u^\star_{h}}_{p,\mathcal{F}_h}\big)\\
&\ls h^{p+1}\big(\abs{u}_{p+1,\Om}+\norm{f}_{p+1,q,\Om}+\norm{g}_{p+1,\Ga_2}\\
&\qquad\qquad+(\norm{\al}_{p+1,\infty,\Omega}+\norm{\beta}_{p+1,\infty,\Omega})\norm{u}_{p,\Om}+\norm{\ga}_{p+1,\infty,\Ga_2}\norm{u}_{p,\Ga_2}\big)\\
&\quad +\big(h(\norm{\al}_{p+1,\infty,\Omega}+\norm{\beta}_{p+1,\infty,\Omega})+h^\frac12\norm{\ga}_{p+1,\infty,\Ga_2}\big) \norm{u-u_h^{\star}}_{1,\Om},}
which together with Theorem~\ref{H1-err} completes the proof of the theorem.
\end{proof}

\begin{remark}
\cite{CR72} derived an $L^2$ error estimate for the elliptic problems with homogeneous Dirichlet boundary condition on curved boundaries. For the case of a flat polygonal domain, the estimate simplifies to:
\eqn{\norm{u-u_{h}^{\star}}_{0,\Om}\ls h^{p+1}\norm{u}_{p+3,q,\Om},}
which imposes stronger regularity requirements on the exact solution $u$ and the dependences on the coefficients were not explicitly given there. 
 
\end{remark}

\section{Lower bounds}\label{lowerBounds}
In this section, we discuss the sharpness of the precision assumption for the quadrature rules in $L^2$-error estimates of the linear FEM.
Theorem~\ref{L2-err} shows that when $ p = 1 $, the situation is special, that is, the numerical volume integration needs to be one order more accurate to achieve the optimal convergence rate for the $ L^2 $-error compared to the $ H^1 $-error. Subsequently, one-dimensional examples are constructed for $\Ga_2\neq\emptyset$ (mixed boundary condition) and $\Ga_2=\emptyset$ (pure Dirichlet boundary condition) to demonstrate this assumption is necessary. 

\subsection{Mixed boundary condition}
In the domain $\Omega=[0,1]$, we consider the two points boundary value problem:
\be
    \begin{cases}
      &-u''=-6x\quad{\rm in}\quad \Omega,\\
      &u(0)=0,\quad u(1)+u'(1)=0,
    \end{cases}
  \ee
with the exact solution $u=x^3-2x$. Take  $\cTh=\set{[x_{i-1},x_i],i=1,\cdots,n}$, where $x_i=ih$, $h=\frac1n$. Denote by $\{\phi_i\}_{i=1}^n$ the corresponding nodal basis functions. Since $u(0)=0$, let $u_h^\star=u_1\phi_1+\cdots+u_n\phi_n$ be the solution of the FEM with numerical integration, and $U=[u_1,\cdots,u_n]^T$ be the solution vector. If we adopt the left-endpoint rectangle rule (with 0-th order algebraic precision) on each element, the stiffness matrix $A\in\mathbb{R}^{n\times n}$ and the right vector $F\in\mathbb{R}^{n\times 1}$ can be obtained and simplified as 
  \begin{equation*}
    A=
    \begin{bmatrix}
      2&-1& & & \\
      -1&2&-1& & \\
       &-1&\ddots&\ddots& \\
       & &\ddots&2&-1\\
        & & &-1&1+h
    \end{bmatrix}
    \qquad F=-6h^3
    \begin{bmatrix}
      1\\
      2\\
      \vdots\\
      n-1\\
      0
    \end{bmatrix}
  \end{equation*}
  Next, we perform LU decomposition of $A$, noted as $A=LR$.
  % \begin{equation*}
  % 	A:=LR=
  % 	\begin{bmatrix}
  % 		2& & & & \\
  % 		-1&\frac32& & & \\
  % 		 &-1&\ddots& & \\
  % 		 & &\ddots&\frac{n}{n-1}& \\
  % 		 & & &-1&2h
  % 	\end{bmatrix}
  % 	\begin{bmatrix}
  % 		1&-\frac12& & & \\
  % 		 &1&-\frac23& & \\
  % 		 & &\ddots&\ddots& \\
  % 		 & & &1&-\frac{n-1}{n}\\
  % 		 & & & &1
  % 	\end{bmatrix}
  % \end{equation*}
  Let $Y$ be the intermediate result satisfying that $LY=F$, we can conclude that
  \[Y=\big[-3h^3,\cdots,-i(2i+1)h^3,\cdots,-(n-1)(2n-1)h^3,-\frac12(n-1)(2n-1)h^2\big]^T.\]
  However, the solution $U$ of $RU=Y$ is hard to be formulated. Fortunately, we can calculate the last two values,
  \[u_{n-1}=-\frac12(n-1)(n+1)(2n-1)h^3,\quad u_n=-\frac12(n-1)(2n-1)h^2.\]
  Let $I_h u$ be the linear interpolation of $u$, then 
  \[(u_h^\star-I_h u)|_{[1-h,1]}=\frac12 h(3-h)x,\]
  thus,
  \begin{align*}
    \norm{u_h^\star-I_h u}_{0,\Om}^2&\ge\int_{(n-1)h}^{1} \frac14 h^2(3-h)^2 x^2\,dx\\
    &=\frac{1}{12}h^3(3-h)^2(1+(n-1)h+(n-1)^2h^2)\\
    &\ge\frac13 h^3.
  \end{align*}
  This implies that 
  \eqn{\norm{u-u_h^\star}_{0,\Om}\ge \norm{I_hu-u_h^\star}_{0,\Om}-\norm{I_hu-u}_{0,\Om}\ge\frac{\sqrt{3}}{3}h^\frac32-Ch^2,}
that is, the $L^2$-error cannot reach optimality when we employ quadrature rules with $2p-2=0$-th order algebraic precision, while it is sufficient for achieving optimal order $H^1$ error estimate. 

\subsection{Dirichlet boundary condition}\label{Ga2-empty}
The case of Dirichlet boundary condition ($\Gamma_2=\emptyset$) is treated separately because it is hard to construct a counterexample on equidistant grids (as in previous subsection) for the quadrature rule requirement for the linear FEM ($p=1$) in Theorem~\ref{L2-err}. In fact our extensive numerical tests on equidistant grids indicate that even a quadrature rule with zero-th order algebraic precision on the reference element $\hat K$ does not impair the  optimal $L^2$ convergence rate of the linear FEM --- possibly due to certain error cancellation properties. Therefore, we attempt to construct a counterexample on non-equidistant, yet still quasi-uniform, grids as follows.

We consider the following two points boundary value problem, 
  \be
    \begin{cases}
      &-u''=-6x\quad{\rm in}\quad \Omega,\\
      &u(0)=0,\quad u(1)=0,
    \end{cases}
  \ee
 where $\Omega=[0,1]$ and the exact solution is $u=x^3-x$. Let  $\cTh=\set{[x_{i-1},x_i],i=1,\cdots,n}$ be a grid of $\Omega$ given by $h=\frac1n$ and $x_i=\frac{ih}{2}+\frac{i^2 h^2}{2}$. Denote by $\{\phi_i\}_{i=0}^n$ the corresponding nodal basis functions. Since $u(0)=u(1)=0$, let $u_h^\star=u_1\phi_1+\cdots+u_{n-1}\phi_{n-1}$ be the discrete solution of the linear FEM with numerical integration, and $U=[u_1,\cdots,u_{n-1}]^T$ be the solution vector. Then we adopt the left-endpoint rectangle rule, the stiffness matrix $A\in\mathbb{R}^{(n-1)\times (n-1)}$  can be precisely calculated and subjected to LU decomposition, i.e.
  \begin{align*}
    A=
    \begin{bmatrix}
      a_1&b_1& & \\
      b_1&a_2&\ddots& \\
       &\ddots&\ddots&b_{n-2}\\
        & &b_{n-2}&a_{n-1}
    \end{bmatrix}
    =
    \begin{bmatrix}
      c_1& & & \\
      b_1&c_2& & \\
       &\ddots&\ddots& \\
       & &b_{n-2}&c_{n-1}
    \end{bmatrix}
    \begin{bmatrix}
      1&d_1& & \\
       &1&\ddots& \\
       & &\ddots&d_{n-2}\\
       & & &1
    \end{bmatrix}
    :=LR,
  \end{align*}
  where $a_i=\frac{2}{h+(2i-1)h^2}+\frac{2}{h+(2i+1)h^2}$, $b_i=-\frac{2}{h+(2i+1)h^2}$, $c_i=\frac{2}{h+(2i+1)h^2}+\frac{2}{ih+i^2h^2}$ and $d_i=-\frac{i+i^2h}{i+1+(i+1)^2h}$.
  And the right vector $F=[f_i]_{(n-1)\times 1}$ is given by $f_i=-\frac32 h^2\big((1+h)i+(3h+h^2)i^2+2h^2i^3\big)$. Subsequently we can calculate the intermediate result $Y$ satisfying that $LY=F$ and obtain 
  \eqn{Y=&[y_i]_{(n-1)\times 1},\quad y_i=-\frac32 h^2\big((1+h)y_{1,i}+(3h+h^2)y_{2,i}+2h^2 y_{3,i}\big),\\
    &y_{1,i}=\frac{ih\big(1+(2i+1)h\big)\left(2 (2i+1)+3i(i+1)h\right)}{24(1+(i+1)h)},\\
    &y_{2,i}=\frac{ih\big(1+(2i+1)h\big)\left(15 i(i+1)+2(2i+1)(3i^2+3i-1)h\right)}{120(1+(i+1)h)},\\
    &y_{3,i}=\frac{ih\big(1+(2i+1)h\big)\left(2 (2i+1)(3i^2+3i-1)+5i(i+1)(2i^2+2i-1)h\right)}{120(1+(i+1)h)}.}
  Let $T=[t_i]_{(n-1)\times 1}$ be the exact solution vector, $t_i=(\frac{ih}{2}+\frac{i^2h^2}{2})^3-\frac{ih}{2}-\frac{i^2h^2}{2}$, then we have
  \[T-U=R^{-1}R(T-U)=R^{-1}(RT-Y).\]
  Let $RT=[s_i]_{(n-1)\times 1}$ and 
  \begin{align*}
    s_i=
    \begin{cases}
      -\frac18 h^3i(1+ih)\big(2i+1+(2i^2+2i+1)h\big)\big(1+(2i+1)h\big),\quad &i=1,\cdots,n-2,\\
      (\frac{(n-1)h}{2}+\frac{(n-1)^2h^2}{2})^3-\frac{(n-1)h}{2}-\frac{(n-1)^2h^2}{2},\quad &i=n-1.
    \end{cases}
  \end{align*}
  It is obvious that $R^{-1}$ is also upper triangular, for any $i\le j$, $R^{-1}(i,j)=\frac{i+i^2h}{j+j^2h}$. And  if $0<h\le\frac12$, then
  \[(RT-Y)_{n-1}=s_{n-1}-y_{n-1}=\frac{h^2}{80}(1-h)(3-h)(h^3 + 21h^2 - 69h + 31)> 0.\]
  Thus, we can deduce that for any $i=2,\cdots,n-2$,
  \begin{align*}
    &t_i-u_i=\sum_{j=i}^{n-1}\frac{i+i^2h}{j+j^2h}(s_j-y_j)\\
    &\ge \sum_{j=i}^{n-2}\frac{i+i^2h}{j+j^2h}\frac{h^4j\big(1+(2j+1)h\big)\big(10j-5+5h(3j^2-j-2)+h^2(6j^3-j^2-9j-1)\big)}{40\big(1+(j+1)h\big)}\\
    &\ge\sum_{j=i}^{n-2}\frac{i+i^2h}{1+jh}\frac{h^4(10j-5)}{40}\ge\frac{h^4(i+i^2h)}{16}\sum_{j=i}^{n-2}(2j-1)\\
    &=\frac{h}{16}(ih+i^2h^2)\big((n-2)^2h^2-(i-1)^2h^2\big)\\
    &\ge\frac{h}{8}x_i(1-2h)(1-2h-x_i), \quad\text{if } 0<h\le\frac13,
  \end{align*}
  where we have used $(1-2h)^2-(ih-h)^2\ge (1-2h)\big(1-2h-\frac12(ih+i^2h^2)\big)$ for  $0<h\le\frac13$ to derive the last inequality.
  Let $\varphi=x(1-2h-x)$ and $I_h \varphi$ be the linear interpolation of $\varphi$. We can get for $0<h\le\frac13$,
  \begin{align*}
    \norm{I_h u-u_h^\star}_{0,\Omega}&\ge \frac{h(1-2h)}{8}\norm{I_h\varphi}_{0,[2h,(n-2)h]}\\
    &\ge \frac{h(1-2h)}{8}\big|\norm{\varphi}_{0,[2h,1-2h]}-\norm{\varphi-I_h\varphi}_{0,[2h,1-2h]}\big|\\
    &\ge \frac{1}{50}h-Ch^2,
  \end{align*}
  where $C$ is independent of the mesh size $h$.
  This implies that 
  \eqn{\norm{u-u_h^\star}_{0,\Om}\ge \norm{I_hu-u_h^\star}_{0,\Om}-\norm{I_hu-u}_{0,\Om}\ge\frac{1}{50}h-Ch^2,}
  that is, the $L^2$-error cannot reach optimality when we employ this kind of quadrature rule with $0$-th order algebraic precision.

\section{Numerical Experiment}\label{numericalExperiment}

In this section, we will demonstrate the impact of integration on FEM error estimates for both the $H^1$- and $L^2$-norms through a series of numerical tests. Additionally, we will highlight that achieving the optimal order of convergence is not feasible when the data lacks the regularity as outlined in the preceding sections. These numerical experiments have been carried out using MATLAB.

Given the unique nature of \( p = 1 \), we use a separate example to verify the necessity of the conditions assumed in previous theories. For convenience of expression, denote by $ap_K$ and $ap_e$ the algebraic precision of numerical integration on $K$ and $e$, respectively.

\begin{example}\label{exm_2D} Consider Problem \eqref{EP} in two dimensions with $\Om$ to be the unit square $[0,1]\times[0,1]$,  $\Ga_1=\set{x\in[0,1],y=0~\text{or}~1}$,  and the data $\alpha=\beta=\gamma=1$. $f$ and $g$ are so chosen so that the exact solution  is given by,
  \begin{align*}
    u=\sin{\pi x}\sin{\pi y}.
  \end{align*}
\end{example}

\begin{figure}[!htbp]
  \centering
  \includegraphics[width=0.98\textwidth]{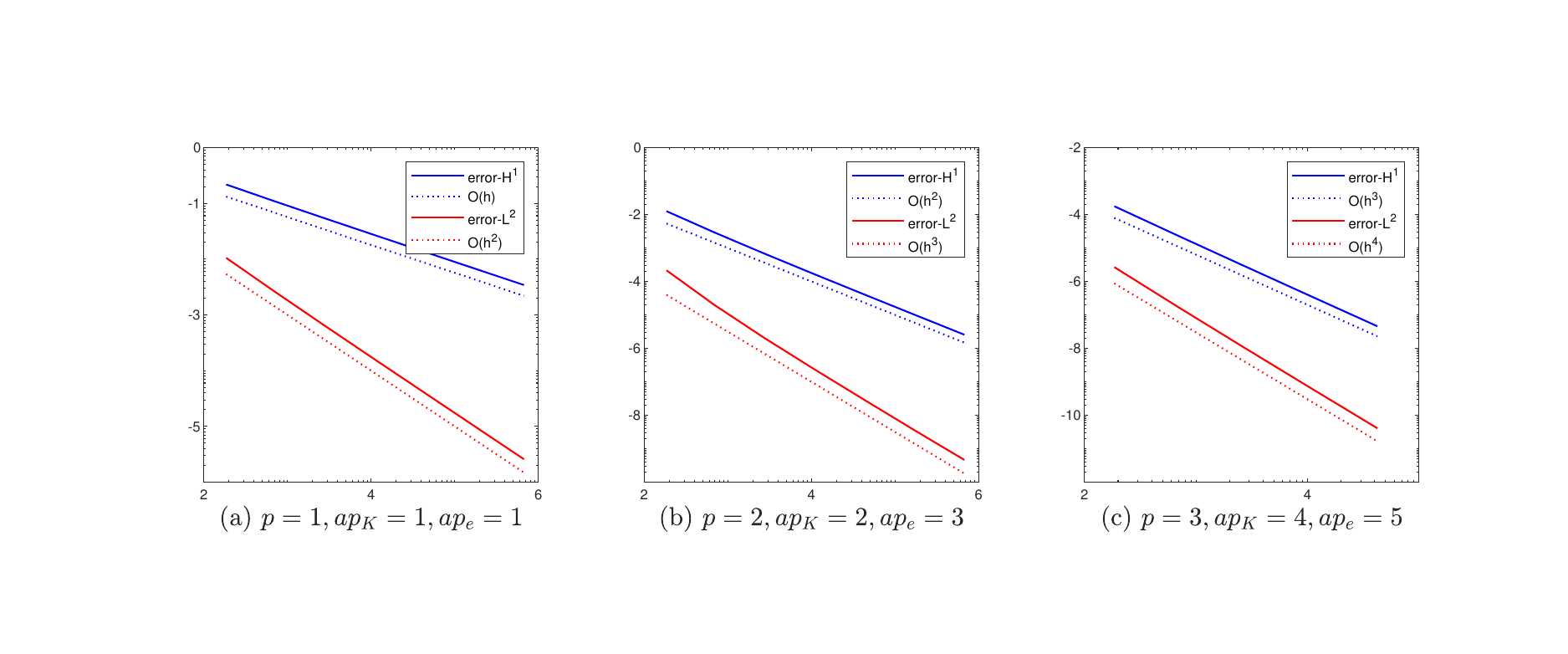}
  \caption{Sufficient algebraic precision of numerical integration.}
  \label{u1}
\end{figure}

In Example~\ref{exm_2D}, we take smooth exact solution $u$ and data functions to explore the impact of numerical integration on error estimates. The meshes are generated by applying one round of local refinement (on a random 25\% of the elements) to each level of a uniformly refined base mesh sequence, with uniformly bounded element size ratios across all levels. As shown in Figure~\ref{u1}, when $ap_K=\max\{p,2p-2\}$ and $ap_e=2p-1, p=1,2,3,$ satisfying Theorem~\ref{H1-err} and Theorem~\ref{L2-err}, the optimal $H^1$- and $L^2$-error convergence order can be achieved.

\begin{figure}[!htbp]
  \centering
  \includegraphics[width=0.98\textwidth]{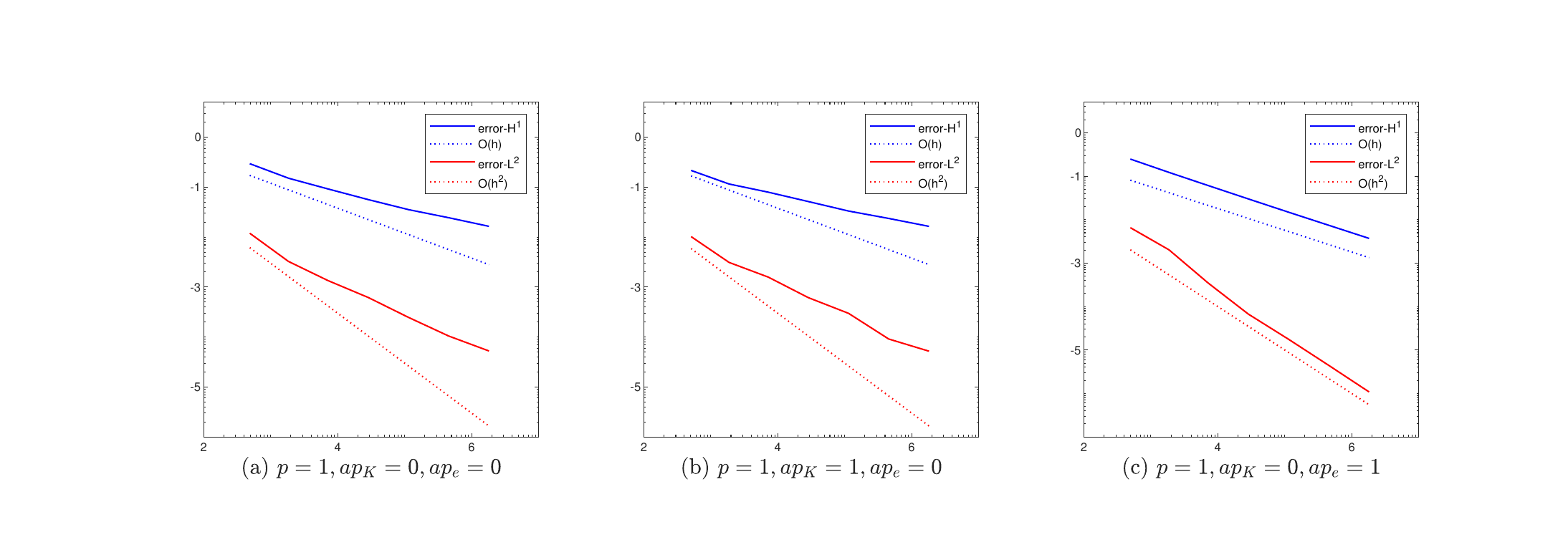}
  \caption{Insufficient algebraic precision of numerical integration for $p=1$.}
  \label{InsufficientAlgebraicPrecisionp1}
\end{figure}
\begin{figure}[!htbp]
  \centering
  \includegraphics[width=0.98\textwidth]{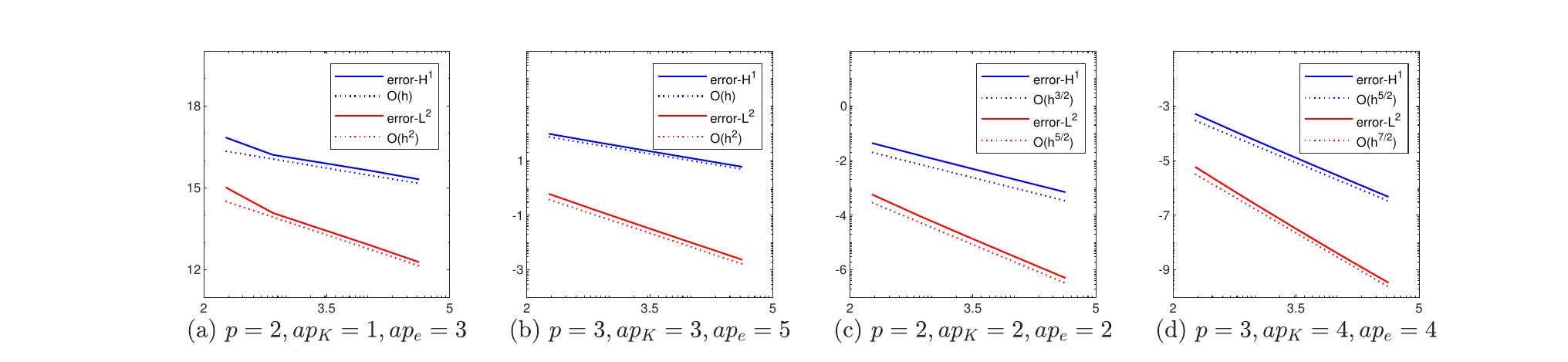}
  \caption{Insufficient algebraic precision of numerical integration for $p=2,3$.}
  \label{InsufficientAlgebraicPrecision}
\end{figure}

By the way, we first verify the necessity of the algebraic precision of numerical integration on $K$ and $e$ required by the theories for $p=2,3$. If \( ap_K \) is chosen as \( 2p - 3 \), then as depicted in Figures \ref{InsufficientAlgebraicPrecision}(a)--(b), neither the \( H^1 \)-error nor the \( L^2 \)-error attain the optimal convergence rate. Notably, when \( p = 2 \), the numerical solution is even erroneous, this phenomenon can be attributed to the loss of uniform ellipticity as established in Lemma~\ref{Vh-ellipticity}. Similarly, when \( ap_e \) is set to \( 2p - 2 \), it is observed in Figures \ref{InsufficientAlgebraicPrecision}(c)--(d) that the both convergences in $H^1$- and $L^2$-norms are suboptimal by loss of a half order. 

As for the case \(p=1\), it can be observed from Figures \ref{InsufficientAlgebraicPrecisionp1}(a)--(b) that $ap_e$ fails to satisfy the conditions of Theorem~\ref{H1-err} and Theorem~\ref{L2-err}, and accordingly, the corresponding errors do not attain the optimal convergence order. In Figure \ref{InsufficientAlgebraicPrecisionp1}(c), the \(ap_K=0\) case still exhibits second-order convergence in the \(L^2\) norm, which is likely caused by certain error cancellation. Furthermore, the necessity of first-order element quadrature precision required for the \(L^2\) error when \(p=1\) is verified in Section~\ref{lowerBounds} and illustrated in Figure \ref{3D}(b).
% As for the absence of this phenomenon when $p=1$ (Fig.~\ref{3D}(c)), we hypothesize that the underlying mechanism may be analogous to the scenario discussed in Subsection~\ref{Ga2-empty}.

\begin{example}\label{exm3}
We set $\Om=[0,1]\times[0,1]$, $\Ga_1=\emptyset$, $\alpha=\beta=1$, and $\ga=0$ in Problem \eqref{EP}, the exact solution is chosen as
  \begin{align*}
    u=-\frac{9}{88}(\sqrt{2}-r)^{\frac{11}{3}},~\text{where}~r=\sqrt{x^2+y^2}.
  \end{align*}
\end{example}

\begin{figure}[!htbp]
  \centering
  \includegraphics[width=0.6\textwidth]{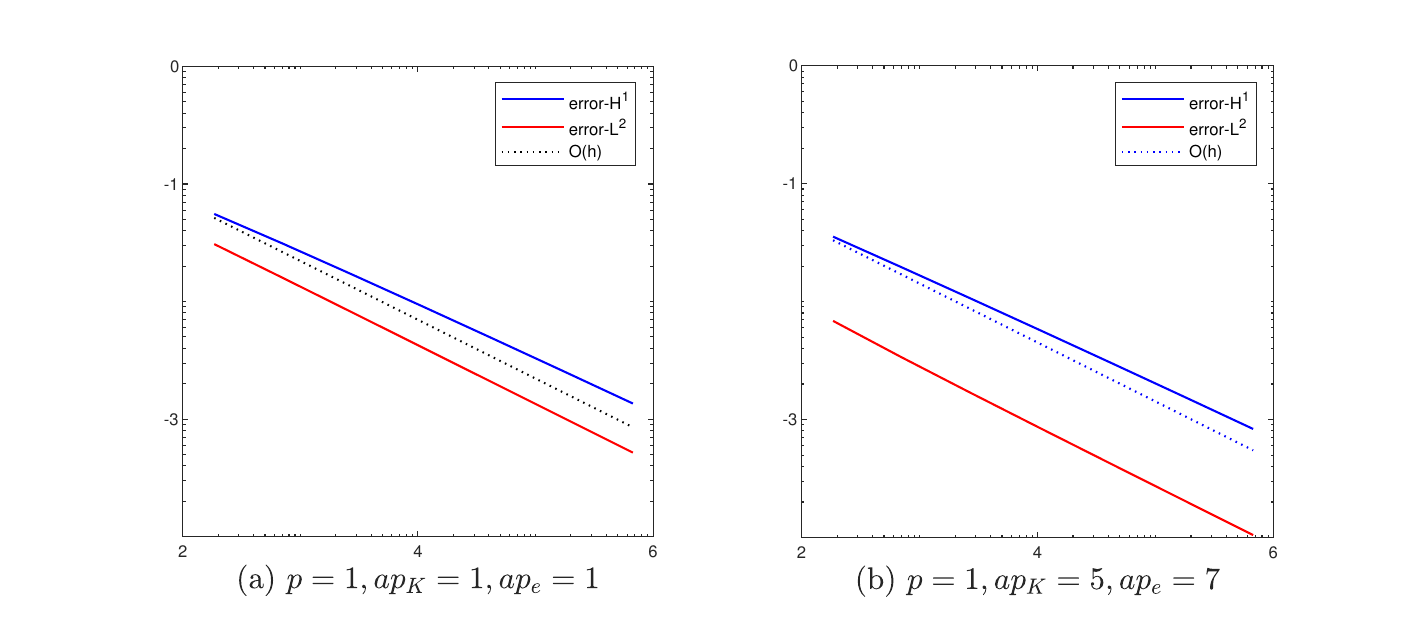}
  \caption{Insufficient data regularity.}
  \label{InsufficientRegularity}
\end{figure}

Next, we examine the necessity of data regularity in Example~\ref{exm3}. Given that $f$ does not belong to $L^2(\Om)$, we restrict our analysis to $p=1$, we take the algebraic precision conditions that happen to have no effect on both $H^1$- and $L^2$-errors ($ap_K=1$ and $ap_e=1$) or far exceed the theoretical results ($ap_K=5$ and $ap_e=7$). As a result in Figure~\ref{InsufficientRegularity}, we observe that the $H^1$-error convergence rate is slightly lower than the theoretical expectation, while the $L^2$-error experiences a significant deviation from the expected theoretical values. This shows that the requirements of data regularity are necessary. Moreover, although over-integration is helpful for accuracy of FE solution, the effect is limited.
 
\begin{example}\label{exm_3D} 
  Consider Problem \eqref{EP} in three dimensions with $\Om$ to be the unit cube $[-1,1]\times[-1,1]\times[0,2]$, $\alpha$, $\beta=1$, $\ga=0$, and $\Ga_1=\emptyset$. $f$ and $g$ are so chosen so that  the exact solution is,
  \begin{align*}
    u=(\frac13 x^3-\frac12 x^2)(y^3-3y)(z^3-3z^2).
  \end{align*}
\end{example}

 Finally, consider this three dimensional numerical example. We discretize the domain $\Om$ using the standard triangulations \cite{BE91} (also called Cube-VI-I mesh in \cite[Chapter 4]{ZY2023}).
  \begin{figure}
    \centering
    \includegraphics[width=0.98\textwidth]{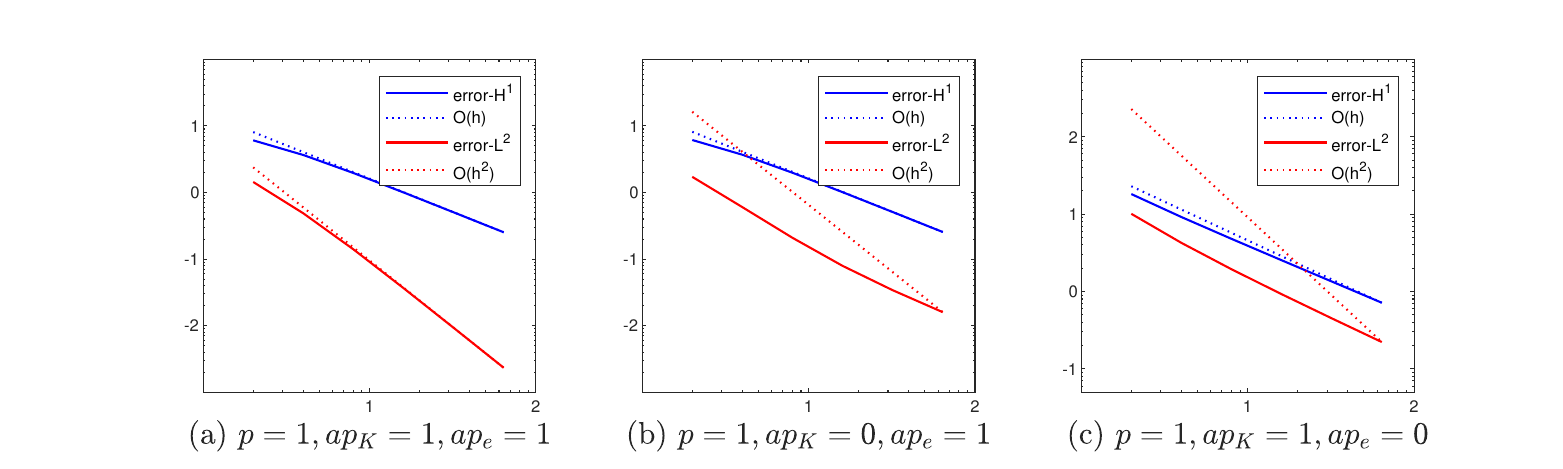}
    \caption{An 3D example with $p=1$.}\label{3D}
  \end{figure}

As depicted in Figure \ref{3D}(a), setting both $ap_K$ and $ap_e$ to $1$ enables both $H^1$- and $L^2$-errors to attain the optimal convergence rate. In Figure \ref{3D}(b), when $ap_K$ is set to $0$, $H^1$ achieves the optimal convergence rate, but $L^2$ does not. The Figure \ref{3D}(c) reveals that with $ap_e$ at $0$, neither $H^1$- nor $L^2$-error displays optimal convergence, with $H^1$-error exhibiting a slight order reduction. These phenomena are consistent with the theoretical results outlined in Theorem~\ref{H1-err} and Theorem~\ref{L2-err}.

\section{Conclusions}
% This paper investigates the influence of numerical integration accuracy on the p-th order finite element discretization of second-order elliptic problems with mixed boundary conditions. Rigorously derived sufficient conditions on the algebraic precision of element and boundary quadrature rules are presented to preserve the optimal \(H^1\) and \(L^2\) error convergence rates. We construct counterexamples for linear elements (\(p=1\)) to demonstrate that first-order accuracy for element quadratures is strictly necessary to achieve the optimal \(L^2\) error. Our results genuinely relax the regularity assumptions on the exact solution imposed by the classical results in \cite{Ciarlet1991,Ci78,CR72}. These results provide additional theoretical support for the optimal choice of quadrature rules in finite element computations. The analysis of numerical integration effects on rectangular elements, and even more broadly for interface problems, is deferred to future work.}

To summarize, this paper revisits the influence of numerical integration accuracy on the \(p\)-th order finite element discretization of second-order elliptic problems, and extends the classical results regarding Dirichlet boundary condition to  mixed Dirichlet and Robin boundary conditions. It derives sufficient conditions on the algebraic precision of element and boundary quadrature rules to preserve the optimal \(H^1\) and \(L^2\) convergence rates. The results are new for Robin boundary conditions. For the case of $L^2$-error and Dirichlet boundary conditions, our results relax the regularity requirements on the exact solution compared with classical results.  Furthermore, counterexamples for linear elements (\(p=1\)) are proposed to verify the necessity of the first-order precision of element quadrature for optimal \(L^2\) convergence.  Extensions to rectangular elements and interface problems will be addressed in future work.

\bibliographystyle{unsrt}
\bibliography{ref}

\end{document}